\newtheorem{theorem}{Theorem}[section]
\newtheorem{lemma}[theorem]{Lemma}
\newtheorem{fact}[theorem]{Fact}
\newtheorem{proposition}[theorem]{Proposition}
\newtheorem{question}[theorem]{Question}
\newtheorem{corollary}[theorem]{Corollary}
\newtheorem{example}[theorem]{Example}
\newtheorem{problem}[theorem]{Problem}
\theoremstyle{definition}
\newtheorem{definition}[theorem]{Definition}
\theoremstyle{remark}
\numberwithin{equation}{section}
\begin{document}

\vspace{0.5in}

\newcommand{\abs}[1]{\lvert#1\rvert}
\def\norm#1{\left\Vert#1\right\Vert}
\def\Q {{\Bbb Q}}
\def\I {{\Bbb I}}
\def\C {{\Bbb C}}
\def\N{{\Bbb N}}
\def\R{{\Bbb R}}
\def\di{{\mathrm{di}}}
\def\Z {{\Bbb Z}}
\def\U{{\Bbb U}}
\def\F{{\mathrm{E}}}
\def\Un{{\mathcal{U}}}
\def\Is{{\mathrm{Is}}\,}
\def\Aut{{\mathrm {Aut}}\,}
\def\supp{{\mathrm {supp}}\,}
\def\Homeo{{\mathrm{Homeo}}\,}
\def\gr{{\underline{\Box}}}
\def\diam{{\mathrm{diam}}\,}
\def\d{{\mathrm{dist}}}
\def\H{{\mathcal H}}
\def\me{{\mathrm{me}}}

\def\a{\alpha}
\def\d{\delta}
\def\D{\Delta}
\def\g{\gamma}
\def\s{\sigma}
\def\Si{\Sigma}
\def\implies{\Rightarrow}
\def\o{\omega}
\def\O{\Omega}
\def\G{\Gamma}

\def\sB{{\mathcal B}}
\def\sC{{\mathcal C}}
\def\sE{{\mathcal E}}
\def\sF{{\mathcal F}}
\def\sG{{\mathcal G}}
\def\sH{{\mathcal H}}
\def\sJ{{\mathcal J}}
\def\sK{{\mathcal K}}
\def\sL{{\mathcal L}}
\def\sM{{\mathcal M}}
\def\sN{{\mathcal N}}
\def\sO{{\mathcal O}}
\def\sP{{\mathcal P}}
\def\sR{{\mathcal R}}
\def\sS{{\mathcal S}}
\def\sT{{\mathcal T}}
\def\sU{{\mathcal U}}
\def\sV{{\mathcal V}}

\def\sbs{\subset}
\def\rar{\rightarrow}
\def\e{\epsilon}

\def\ti{\times}
\def\obr{^{-1}}
\def\stm{\setminus}
\def\newline{\hfill\break}

\def\Exp{{\mathrm{Exp}}\,}
\def\Iso{{\mathrm{Iso}}\,}
\def\Sym{{\mathrm{Sym}}\,}


\title[Is the free locally convex space $L(X)$ nuclear?]{Is the free locally convex space $L(X)$ nuclear?}
\date{\today}
\author{Arkady Leiderman} 

\address{Department of Mathematics, Ben-Gurion University of the Negev, Beer Sheva, P.O.B. 653, Israel}

\email{arkady@math.bgu.ac.il}

\author{Vladimir Uspenskij}

\address{321 Morton Hall, Department of Mathematics, Ohio University, Athens, Ohio 45701, USA}

\email{uspenski@ohio.edu}

\dedicatory{Dedicated to Mar{\'\i}a Jes\'us Chasco on the occasion of her birthday}  

\begin{abstract}
 Given a class $\sP$ of Banach spaces, a locally convex space (LCS) $E$
is called {\em multi-$\sP$} if $E$ can be isomorphically embedded into a product of spaces that belong to $\sP$.
We investigate the question whether the free locally convex space $L(X)$ is strongly nuclear, nuclear, Schwartz, 
multi-Hilbert or multi-reflexive. 

If $X$ is a Tychonoff space containing an infinite compact subset
then, as it follows from the results of \cite{Aus}, $L(X)$ is not nuclear. 
We prove that for such $X$ the free LCS $L(X)$ has the stronger property of not being multi-Hilbert. 
We deduce that if $X$ is a $k$-space, then the following properties are equivalent: (1) $L(X)$ is strongly nuclear; (2) $L(X)$ is nuclear; (3) $L(X)$ is multi-Hilbert; (4) $X$ is countable and discrete. 
On the other hand, we show that $L(X)$ is strongly nuclear for every projectively countable $P$-space (in particular, for every Lindel\"of $P$-space) $X$.

We observe that every Schwartz LCS is multi-reflexive. It is known that 
if $X$ is a $k_\omega$-space, then $L(X)$ is a Schwartz LCS \cite{Chasco}, hence  $L(X)$ is multi-reflexive. We show that for any first-countable paracompact
 (in particular, metrizable) space $X$ the converse is true, so $L(X)$ is multi-reflexive if and only if $X$ is a $k_\omega$-space,
equivalently, if $X$ is a locally compact and $\sigma$-compact space.

Similarly, we show that for any first-countable paracompact space $X$
 the free abelian topological group $A(X)$ is a Schwartz group 
if and only if $X$ is a locally compact space such that the set $X^{(1)}$ of all non-isolated points of $X$ is $\sigma$-compact. 
\end{abstract}

\keywords{Free locally convex space, nuclear locally convex space, Schwartz locally convex space, Hilbert space, reflexive Banach space}

\subjclass[2010]{Primary 46A03; Secondary 46B25, 54D30}


\maketitle
\section{Introduction}\label{s:intro}
\bigskip

All topological spaces are assumed to be Tychonoff and all topological vector spaces and groups are Hausdorff.
All vector spaces are considered over the field of real numbers $\mathbb R$.
The abbreviation LCS means locally convex space.

Every Tychonoff space $X$ can be considered as a subspace of its {\it free locally convex space}
$L(X)$ characterized by the following property: every continuous mapping $f:X\to E$ to an LCS
$E$ uniquely extends to a continuous linear mapping $\widehat f:L(X)\to E$. 

Similarly, one defines the {\it free abelian group} $A(X)$ over a Tychonoff space $X$: $X$ is a subspace of generators of $A(X)$
 and the topology of $A(X)$ is 
characterized by the property that every continuous mapping $f:X\to G$ to an abelian topological group $G$
uniquely extends to a continuous homomorphism $\widehat f:A(X)\to G$. 
It is known that $A(X)$ naturally embeds into $L(X)$ (\cite{Tk}, \cite{Usp1}).

Free topological groups were introduced in 1941 by A. A. Markov in the short note \cite{Markov1}.
 The complete construction appeared 4 years later, in \cite{Markov2},
where certain basic properties of free topological group $F(X)$ and free abelian topological group $A(X)$ over a Tychonoff space $X$ were established.
In particular, Markov answered in the negative Kolmogorov's question as to whether every topological group is a normal space.
Several proofs of the existence of free topological groups have been given,
it seems that the shortest one is due to S. Kakutani \cite{Kakutani2}.
 Another proof, due to M. I. Graev \cite{Graev1}, \cite{Graev2} is based on extending continuous pseudometrics from
a Tychonoff space $X$ to invariant continuous pseudometrics on $F(X)$ (or $A(X)$).
Free topological vector spaces were mentioned by A. A. Markov \cite{Markov1}, without any details. Later, in 1964, D. A. Raikov constructed the free locally convex space $L(X)$ for a uniform space $X$ \cite{Raikov}.

Free LCS constitute a very important subclass of locally convex spaces.
It suffices to mention that every locally convex space is a linear quotient of a free LCS
and that every Tychonoff space $X$ embeds as a \emph{closed} subspace into $L(X)$.
Analogous basic facts are valid for $F(X)$ and $A(X)$.

Given a class $\sP$ of Banach spaces, an LCS $E$
is called {\em multi-$\sP$} if $E$ can be isomorphically embedded in a product of spaces in $\sP$.
The following result was obtained in \cite{Usp08}: for every Tychonoff space $X$ the free LCS
 $L(X)$ can be isomorphically embedded in the product of Banach spaces of the form $l^1(\Gamma)$, in other words,
 $L(X)$ is multi-$\sL^1$, where $\sL^1$ is the class of all Banach spaces of the form
$l^1(\Gamma)$. It follows that, 
as a topological group, $L(X)$ admits a topologically faithful unitary representation, 
that is, $L(X)$ is isomorphic to a subgroup of the unitary group. 

Vladimir Pestov raised the following
\begin{question}\label{q:pestov}
When is $L(X)$ {multi-Hilbert}?
\end{question}

The question was motivated by the fact that $L(X)$ is nuclear, hence multi-Hilbert,
if $X$ is a countable
discrete space \cite[Ch. 3, Theorem 7.4]{Sch}; 
in that case $L(X)$ is the locally convex direct sum
of countably many one-dimensional spaces, and 
will be denoted by $\varphi$.

In Section \ref{s:nuclear} we prove that $L(X)$ is not multi-Hilbert 
whenever $X$ contains an infinite compact subset.
As a consequence we deduce that if $X$ is a $k$-space, then the following properties are equivalent:
 (1) $L(X)$ is strongly nuclear; (2) $L(X)$ is nuclear; (3) $L(X)$ is multi-Hilbert; (4) $X$ is countable and discrete. 
On the other hand, we show that $L(X)$ is strongly nuclear (hence nuclear) for every projectively countable $P$-space (in particular, for every Lindel\"of $P$-space) $X$.

Recall the definition of nuclear maps and nuclear spaces.
\begin{definition}\label{d:nuclear}
 A linear map $u:E\to B$
from an LCS $E$ to a Banach space $B$ is called {\em nuclear} if it can be written in the form
$$
u(x)=\sum_{n=1}^\infty \lambda_n f_n(x) y_n, 
$$
where $\sum |\lambda_n|<+\infty$, $(f_n)$ is an equicontinuous sequence of linear functionals on $E$, 
and $(y_n)$ is a bounded sequence in $B$. An LCS $E$ is called {\em nuclear} if any continuous linear map from
$E$ to a Banach space is nuclear.
\end{definition}

The definition of nuclear LCS and the following important permanence results about nuclear LCS are due to  A. Grothendieck \cite{Grot2} (see also \cite[Ch. 3, \S 7]{Sch}).
\begin{itemize}
\item Every subspace of a nuclear LCS is nuclear.
\item Every Hausdorff quotient space of a nuclear LCS is nuclear.
\item The product of an arbitrary family of nuclear LCS is nuclear.
\item The locally convex direct sum of a countable family of nuclear LCS is a nuclear space. 
\item Every nuclear LCS is multi-Hilbert.
\end{itemize}
\noindent
Note also
\begin{itemize}
\item The class of multi-Hilbert LCS is closed under Hausdorff quotients.
\end{itemize}

Lemma \ref{l:factor_hilbert} below also should be attributed to A. Grothendieck.
For the definition of a {\em nuclear group} and a study of free abelian nuclear groups we refer the reader to \cite{Aus}.
Note here only that the class of nuclear groups contains all nuclear LCS.
 
Michael Megrelishvili asked whether $L(X)$ is {\em multi-reflexive}.
It turns out that this problem is tightly connected with the properties of the Schwartz locally convex spaces.
Schwartz LCS play an important role in analysis and its applications.
Remark that in several aspects the properties of Schwartz LCS resemble the finite-dimensional Banach spaces,
for instance, their bounded subsets are precompact. The latter property for Banach spaces is equivalent to finite-dimensionality.
The notion of Schwartz LCS was introduced by A. Grothendieck in \cite{Grot1}.
In this paper we will use the following equivalent versions: 

\begin{definition}\label{d:Sch}
An LCS $E$ is called a {\em Schwartz space} if one of the following equivalent conditions holds:
\begin{itemize}
\item [(a)] Every continuous linear map from $E$ to a Banach space
is compact, that is, sends a neighborhood of zero to a set with a compact closure;
\item [(b)] For every neighborhood $U$ of zero in $E$, there exits another neighborhood $V$ of zero 
such that for every $\lambda > 0$  the set $V$ can be covered by finitely many translates of $\lambda U$.
\end{itemize}
\end{definition}

For other equivalent definitions of a Schwartz space see \cite[Section 3.15]{Horvath}. 
Several relevant problems in the theory of Schwartz LCS were solved in 
\cite{Bel1}, \cite{Bel2}, \cite{Randtke1}, \cite{Randtke2}, \cite{Terzioglu} (our list of references does not attempt to be complete).
Note that the class of all Schwartz LCS (as well as the classes of all strongly nuclear, all nuclear, all multi-Hilbert and all multi-reflexive LCS) constitutes a {\em variety}, i.e.
is closed under the formation of subspaces (not necessarily closed), Hausdorff quotients, arbitrary products and isomorphic images (see \cite{DMS}).

We observe that every Schwartz LCS is multi-reflexive. This follows, for example, from 
\cite[17.2.9 and 21.1.1(c)]{Jarch}; we give a short proof of this assertion in Theorem~\ref{t:cnuc}.
It is known that $L(X)$ is a Schwartz space, hence multi-relexive, for every $k_\omega$-space $X$
\cite[Theorem 5.2]{Chasco}. In Section \ref{s:multi} we provide an alternative proof of this fact (Theorem~\ref{t:lx}). 

Recall that a topological space $X$ satisfies the {\it first axiom of countability} if
each point of $X$ has a countable base of neighborhoods.
In Section \ref{s:multi} we give a complete answer to the question: when is $L(X)$ multi-reflexive?
 --- for paracompact spaces $X$ satisfying the first axiom of countability; namely, 
if $X$ is such a space, then the following properties are equivalent:
(1) $L(X)$ is a Schwartz space; (2) $L(X)$ is multi-reflexive; (3) $X$ is locally compact and $\sigma$-compact
 (= $X$ is the union of countably many compact sets).
 In particular, for a metrizable space $X$,
$L(X)$ is multi-reflexive if and only if $X$ is locally compact space and has a countable base. 
 We also give a short direct proof of the fact that $L(X)$ is not multi-reflexive if
$X$ is the space of irrational numbers (Example~\ref{e:P}).

A group version of the concept of a Schwartz space is introduced and studied in the paper \cite{Chasco} (see also \cite{Aus2}).
In Section \ref{s:A(X)} we give a complete answer to the question: when is the free abelian group 
$A(X)$ a {\it Schwartz group}? ---
 for paracompact spaces $X$ satisfying the first axiom of countability. 
In that case,
$A(X)$ is a Schwartz group 
if and only if $X$ is a locally compact space such that the set $X^{(1)}$ of all non-isolated points of $X$ is $\sigma$-compact. 

Our notations are standard, the reader is advised to consult with the monographs  \cite{Eng}, \cite{Fabian}, \cite{Sch}
 for the notions which are not explicitly defined in the text. 
In the end of the article we pose several open questions.
 
\section{Nuclear and multi-Hilbert $L(X)$} \label{s:nuclear}
\bigskip
Let $X$ be a Tychonoff space containing an infinite compact subset $K$.
Every continuous pseudometric defined on the compact space $K$ extends to a continuous pseudometric on $X$, therefore $L(K)$
can be identified by a linear topological isomorphism with a  subspace of $L(X)$ \cite{Usp1}. Since the free abelian group $A(K)$ naturally embeds into $L(K)$,
 we observe that $A(K)$ is isomorphic to a topological subgroup of $L(X)$. Hence, if $L(X)$ is a nuclear LCS,
then $A(K)$ is a nuclear group. However, the free abelian group $A(K)$ is nuclear if and only if a compact space $K$ is finite \cite[Theorem 7.3]{Aus}.
This observation leads to

\begin{proposition}
\label{p: observ}
If a Tychonoff space $X$ contains an infinite compact subset, then $L(X)$ is not nuclear.
\end{proposition}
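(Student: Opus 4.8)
The plan is to reduce the statement to the known fact that the free abelian topological group on an infinite compact space is not nuclear, transporting that obstruction into $L(X)$ by suitable embeddings. First I would choose an infinite compact subspace $K\subset X$. The key input is that every continuous pseudometric on the compact space $K$ extends to a continuous pseudometric on $X$; since the topology of a free LCS is generated by the seminorms arising from continuous pseudometrics, this yields that the canonical linear map $L(K)\to L(X)$ induced by the inclusion $K\hookrightarrow X$ is a linear topological embedding, which is precisely the result of \cite{Usp1}.

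Next I would compose this embedding with the canonical topological group embedding $A(K)\hookrightarrow L(K)$ (see \cite{Tk}, \cite{Usp1}), obtaining an isomorphic embedding of $A(K)$, as a topological group, onto a subgroup of $L(X)$. Now suppose, for contradiction, that $L(X)$ is nuclear. Since every nuclear LCS is a nuclear group, and every subgroup of a nuclear group is again a nuclear group (part of the basic theory recalled in \cite{Aus}), it follows that $A(K)$ is a nuclear group.

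Finally I would invoke \cite[Theorem 7.3]{Aus}, which states that the free abelian group $A(K)$ on a compact space $K$ is nuclear if and only if $K$ is finite. This contradicts the choice of $K$ as an infinite compact set, so $L(X)$ cannot be nuclear. The only step requiring any care is the first one --- that the inclusion $K\hookrightarrow X$ induces a \emph{topological} embedding of free locally convex spaces rather than merely a continuous injection onto its image --- but this is exactly what the pseudometric extension property delivers, so I do not anticipate a genuine obstacle; the rest is a matter of stringing together the quoted permanence properties of nuclear groups with the cited theorem on $A(K)$.
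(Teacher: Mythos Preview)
Your proposal is correct and is essentially identical to the paper's own argument: both embed $L(K)$ into $L(X)$ via the pseudometric extension property (citing \cite{Usp1}), then embed $A(K)$ into $L(K)$, and finally invoke \cite[Theorem~7.3]{Aus} to derive a contradiction from the assumed nuclearity of $L(X)$. The only slight addition in your version is the explicit remark that a nuclear LCS is a nuclear group and that nuclear groups pass to subgroups, which the paper leaves implicit.
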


Below we prove a significantly stronger result, the proof of which requires 
more elaborate arguments from functional analysis.

\begin{theorem}
\label{t:main}
If a Tychonoff space $X$ contains an infinite compact subset, then $L(X)$ is not multi-Hilbert.
\end{theorem}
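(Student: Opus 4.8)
The plan is to reduce to an infinite compact subset $K\subseteq X$ that is the closure of a countable discrete set, and there to exhibit a continuous seminorm on $L(K)$ that no continuous Hilbertian seminorm can dominate; the obstruction is the parallelogram identity, and it is visible already on a weighted $\ell^{1}$-type seminorm with slowly decaying weights.

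\emph{Reductions.} By the argument used for Proposition~\ref{p: observ}, $L(K)$ is a linear topological subspace of $L(X)$ whenever $K\subseteq X$ is compact; since the class of multi-Hilbert spaces is a variety (in particular, hereditary for subspaces), it suffices to find an infinite compact $K\subseteq X$ with $L(K)$ not multi-Hilbert. An infinite compact (Hausdorff) space contains a countably infinite discrete subspace $D=\{x_{n}:n\ge 1\}$; replacing $K$ by the closure of $D$ in $K$ (again an infinite compact subset of $X$) we may assume $K=\overline{D}$. Then $D$ is open and dense in $K$, each singleton $\{x_{n}\}$ is clopen in $K$, and $R:=K\setminus D$ is a nonempty compact set; fix a point $z_{0}\in R$.

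\emph{The seminorm.} Put $t_{n}=n^{-1/2}$, so $t_{n}\downarrow 0$ while $\sum_{n}t_{n}^{2}=\infty$, and let $f_{n}:=t_{n}\mathbf{1}_{\{x_{n}\}}\colon K\to[0,1]$, a continuous function. For $a,b\in K$ at most two of the terms $|f_{n}(a)-f_{n}(b)|$ are nonzero, so $\rho(a,b):=\sum_{n}|f_{n}(a)-f_{n}(b)|$ is a well-defined pseudometric; moreover $\rho$ differs uniformly by at most $2t_{N+1}$ from its $N$th partial sum, hence $\rho$ is continuous. One checks $\rho(x_{n},z_{0})=t_{n}$ and $\rho(x_{n},x_{k})=t_{n}+t_{k}$ for $n\neq k$. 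As in Graev's extension of pseudometrics, $\rho$ gives rise to a continuous seminorm $\|\cdot\|_{\rho}$ on $L(K)$ with $\|\mu\|_{\rho}\ge\langle g,\mu\rangle$ for every $\rho$-nonexpansive $g\in C(K)$ and every $\mu\in L(K)$ of total weight $0$. Writing $b_{n}:=x_{n}-z_{0}\in L(K)$, for finitely supported reals $(c_{n})$ the function $g=\sum_{n}\operatorname{sgn}(c_{n})f_{n}$ is continuous, $\rho$-nonexpansive, vanishes at $z_{0}$, and pairs with $\sum_{n}c_{n}b_{n}$ to give $\sum_{n}|c_{n}|t_{n}$, whence
\[
\Big\|\sum_{n}c_{n}b_{n}\Big\|_{\rho}\ \ge\ \sum_{n}|c_{n}|\,t_{n}.
\]
Since $K$ is compact, $\{x:x\in K\}$ is bounded in $L(K)$, hence so is $\{b_{n}:n\ge1\}$, so every continuous seminorm on $L(K)$ is bounded on $\{b_{n}\}$.

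\emph{The obstruction.} Suppose $L(K)$ were multi-Hilbert. By Lemma~\ref{l:factor_hilbert} there is a continuous Hilbertian seminorm $p\ge\|\cdot\|_{\rho}$; say $p(\cdot)^{2}=\phi(\cdot,\cdot)$ with $\phi$ a positive semidefinite symmetric bilinear form. Then for any $v_{1},\dots,v_{m}$, averaging over the $2^{m}$ sign patterns $\varepsilon\in\{\pm1\}^{m}$ kills the cross terms: $2^{-m}\sum_{\varepsilon}p(\sum_{i}\varepsilon_{i}v_{i})^{2}=\sum_{i}p(v_{i})^{2}$. Taking $v_{i}=c_{i}b_{i}$ and applying the displayed estimate to every sign pattern gives $\sum_{i}c_{i}^{2}\,p(b_{i})^{2}\ge\big(\sum_{i}|c_{i}|t_{i}\big)^{2}$. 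Put $r_{i}:=p(b_{i})$; then $r_{i}\ge\|b_{i}\|_{\rho}=t_{i}>0$, and $M:=\sup_{i}r_{i}<\infty$ by the boundedness of $\{b_{n}\}$. Choosing $c_{i}=t_{i}/r_{i}^{2}$ for $i\le m$, the inequality becomes $\sum_{i\le m}t_{i}^{2}/r_{i}^{2}\ge\big(\sum_{i\le m}t_{i}^{2}/r_{i}^{2}\big)^{2}$, so $\sum_{i\le m}t_{i}^{2}/r_{i}^{2}\le1$; letting $m\to\infty$ and using $r_{i}\le M$ yields $\sum_{i}t_{i}^{2}\le M^{2}<\infty$, contradicting $\sum_{i}t_{i}^{2}=\sum_{i}1/i=\infty$. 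Hence $L(K)$, and therefore $L(X)$, is not multi-Hilbert. The facts invoked in passing — $L(K)\hookrightarrow L(X)$, the existence of a countable discrete subspace, a compact set being bounded in its free LCS, and a continuous pseudometric yielding a continuous seminorm on $L(K)$ — are all standard. I expect the only genuine work to be the construction of $\rho$: it must decay ($t_{n}\to0$, which is what forces continuity at the points of $R$) yet spread the $x_{n}$ apart in the $\ell^{1}$ sense ($\rho(x_{n},x_{k})=t_{n}+t_{k}$), and it is exactly the clopenness of the $\{x_{n}\}$ in $\overline{D}$ that makes such a $\rho$ available.
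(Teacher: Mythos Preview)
Your proof is correct and takes a genuinely different route from the paper's.

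The paper first proves the special case of a convergent sequence $S$ (Fact~\ref{l:sequence}) by invoking a nontrivial Banach-space result: there is a Banach space $E$ and a null sequence in $E$ not contained in any ellipsoid (Lemma~\ref{l:1}, proved via the Fonf--Johnson--Plichko--Shevchyk theorem on compact sets not covered by operator ranges from spaces with a basis). Then the general compact $K$ is handled by mapping $K$ onto an infinite compact $M\subset[0,1]$, observing that $L(K)\to L(M)$ is a linear quotient, and using that $M$ contains a convergent sequence.

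Your argument bypasses all of this. You reduce only to the hereditary step $L(K)\hookrightarrow L(X)$, pass to the closure of a countable discrete set, and build by hand a continuous pseudometric $\rho$ that makes $\overline\rho$ look like a weighted $\ell^{1}$-norm on the span of the $b_n$. The contradiction then comes from the elementary Rademacher/parallelogram identity $2^{-m}\sum_\varepsilon p(\sum\varepsilon_i v_i)^2=\sum p(v_i)^2$, which forces any dominating Hilbertian seminorm to satisfy $\sum t_n^2/p(b_n)^2\le 1$, impossible since $p(b_n)$ is bounded and $\sum t_n^2=\infty$. In effect you are exhibiting directly that the canonical map $\ell^2\to\ell^1$ is unbounded, without importing any structure theory of Banach spaces.

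What each approach buys: the paper's route is conceptual---it identifies the obstruction as ``compact sets not inside ellipsoids'' and links the question to approximation-property phenomena---but it is not self-contained. Your route is entirely elementary and self-contained (the only external input is Lemma~\ref{l:factor_hilbert} and the standard Arens--Eells duality $\overline\rho(\mu)=\sup\{\langle g,\mu\rangle: \mathrm{Lip}_\rho(g)\le 1\}$), and it works uniformly on any infinite compact $K$ without the quotient reduction to a convergent sequence. One small cosmetic point: the seminorm $\overline\rho$ as defined in~(\ref{eq:L1}) lives on $L_0(K)$, so strictly speaking you are showing $L_0(K)$ is not multi-Hilbert; since $L_0(K)$ is a subspace of $L(K)$ this is of course enough.
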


In order to prove Theorem \ref{t:main} we need some preparations. 
Let us say that a subset $C$ of a Banach space $E$ is an {\em ellipsoid} if $C$ is the image
of a closed ball in a Hilbert space $H$ under a bounded linear mapping $H\to E$.
Note that ellipsoids are weakly compact, hence closed in $E$.

\begin{lemma}
\label{l:1}
There exists a Banach space $E$ and a countable sequence $S=(x_n)$ converging to zero in $E$ such that 
$S$ is not contained in any ellipsoid in $E$. 
\end{lemma}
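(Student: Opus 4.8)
The plan is to realize $S$ inside the Banach space $E=l^1$ as a union of rescaled copies of the unit vector bases of the finite-dimensional spaces $l^1_{k_n}$, with $k_n\to\infty$ and the rescaling factors chosen so small that $S$ converges to zero, but large relative to $1/\sqrt{k_n}$, so that the obstruction to fitting larger and larger initial pieces of $S$ into one ellipsoid --- namely, that such an ellipsoid would force the identity of $l^1_{k_n}$ to factor "cheaply" through a Hilbert space --- becomes unbounded. Concretely, fix integers $k_n\to\infty$ and scalars $\e_n>0$ with $\e_n\to 0$ and $\e_n\sqrt{k_n}\to\infty$ (for instance $k_n=n$, $\e_n=n^{-1/4}$), partition $\N$ into consecutive finite blocks $B_n$ with $|B_n|=k_n$, let $(f_j)_{j\in\N}$ be the standard unit vectors of $l^1$, and let $S=(x_m)$ be an enumeration, block by block, of all the vectors $\e_n f_j$ with $n\ge1$ and $j\in B_n$. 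Since block $B_n$ contributes exactly $k_n$ vectors, each of norm $\e_n\to 0$, the sequence $S$ converges to zero in $l^1$.

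Recall that a bounded operator $\phi\colon X\to Y$ between Banach spaces \emph{factors through a Hilbert space} if $\phi=wv$ with bounded $v\colon X\to H$, $w\colon H\to Y$ and $H$ Hilbert; let $\gamma_2(\phi)\in[0,\infty]$ be the infimum of $\|v\|\,\|w\|$ over such factorizations. Then $\gamma_2(a\phi b)\le\|a\|\,\gamma_2(\phi)\,\|b\|$ for composable operators, and $\gamma_2(\mathrm{id}_{l^1_N})\ge\sqrt N$: if $\mathrm{id}_{l^1_N}=wv$, put $h_i=v(e_i)$, so that $w(h_i)=e_i$ and, for every choice of signs $\varepsilon_i=\pm 1$, $\ N=\big\|\sum_i\varepsilon_i e_i\big\|_{l^1_N}=\big\|w\big(\sum_i\varepsilon_i h_i\big)\big\|\le\|w\|\,\big\|\sum_i\varepsilon_i h_i\big\|$; squaring and averaging over the $2^N$ sign choices (the average of $\|\sum_i\varepsilon_i h_i\|^2$ being $\sum_i\|h_i\|^2$ by the parallelogram identity) gives $N^2\le\|w\|^2\sum_i\|h_i\|^2\le\|w\|^2 N\|v\|^2$, whence $\|v\|\,\|w\|\ge\sqrt N$.

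Now suppose, towards a contradiction, that $S$ is contained in an ellipsoid $C=T(B_H)$, $T\colon H\to E=l^1$ bounded. Replacing $H$ by $(\ker T)^\perp$ we may assume $T$ injective, without changing $T(B_H)$ or increasing $\|T\|$. Fix $n$ and let $\phi_n\colon l^1_{k_n}\to l^1$ send the $i$-th unit vector to $\e_n f_{j_i}$, where $B_n=\{j_1,\dots,j_{k_n}\}$. Then $\phi_n(B_{l^1_{k_n}})$ is the closed absolutely convex hull of $\{\e_n f_j:j\in B_n\}\subseteq S$, hence is contained in that of $S$, which in turn (ellipsoids being absolutely convex and closed) is contained in $C=T(B_H)$. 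Since $T$ is injective, $u_n:=T^{-1}\circ\phi_n\colon l^1_{k_n}\to H$ is a well-defined linear operator with $Tu_n=\phi_n$ and $u_n(B_{l^1_{k_n}})=T^{-1}\big(\phi_n(B_{l^1_{k_n}})\big)\subseteq T^{-1}(T(B_H))=B_H$, so $\|u_n\|\le1$ and therefore $\gamma_2(\phi_n)\le\|T\|\,\|u_n\|\le\|T\|$. On the other hand, the coordinate restriction $P_n\colon l^1\to l^1_{k_n}$, $x\mapsto(x_j)_{j\in B_n}$, has norm $1$ and satisfies $P_n\phi_n=\e_n\,\mathrm{id}_{l^1_{k_n}}$, so
$$\gamma_2(\phi_n)\ \ge\ \gamma_2(P_n\phi_n)\ =\ \gamma_2(\e_n\,\mathrm{id}_{l^1_{k_n}})\ =\ \e_n\,\gamma_2(\mathrm{id}_{l^1_{k_n}})\ \ge\ \e_n\sqrt{k_n}.$$
Thus $\e_n\sqrt{k_n}\le\|T\|$ for every $n$, contradicting $\e_n\sqrt{k_n}\to\infty$. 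Hence $S$ lies in no ellipsoid, which proves the lemma.

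The only delicate step is the passage, in the last paragraph, from the purely set-theoretic inclusion $\phi_n(B_{l^1_{k_n}})\subseteq T(B_H)$ to an honest linear factorization of $\phi_n$ through $H$; the reduction to injective $T$ (so that $T^{-1}$ is a genuine linear map on $T(H)$) is exactly what makes this work. Everything else is the elementary finite-dimensional estimate $\gamma_2(\mathrm{id}_{l^1_N})\ge\sqrt N$ together with the ideal property of $\gamma_2$.
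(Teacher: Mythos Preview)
Your proof is correct and takes a genuinely different route from the paper's. The paper argues non-constructively: it picks a Banach space $E$ without the approximation property, invokes a result of Fonf--Johnson--Plichko--Shevchyk to produce a compact set $K\subset E$ not contained in the range of any injective operator from a Banach space with a basis (hence in no ellipsoid), and then passes to a null sequence whose closed convex hull contains $K$. Your argument, by contrast, is explicit and elementary: you work in $E=l^1$, build $S$ from rescaled blocks of unit vectors, and derive the obstruction from the finite-dimensional estimate $\gamma_2(\mathrm{id}_{l^1_N})\ge\sqrt N$ together with the ideal property of $\gamma_2$. This is essentially the alternative the paper alludes to in the remark following its proof (via Joichi's result on Banach--Mazur distances from Euclidean spaces), carried out concretely for $l^1$. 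The paper's approach is shorter given the cited literature and yields the stronger conclusion that $K$ avoids all operator ranges of spaces with basis; yours is self-contained, requires no approximation-property machinery, and makes the quantitative reason for the failure transparent.
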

 
\begin{proof}
Let $E$ be a Banach space without the approximation property.
According to \cite[Theorem 1.2]{Fonf},
there is a compact subset $K$ in $E$ such that for any Banach space $F$ with a basis and any 
injective bounded operator $T:F\to E$
the image $T(F)$ does not contain $K$. Then $K$ is not contained in any ellipsoid. Indeed,
if $H$ is a Hilbert space and $A:H\to E$ is a bounded linear operator, we can write $A$ as the composition $H\to \breve{H}\to E$, where
$\breve{H}$ is the quotient of $H$ by the kernel of the operator $A$. Then $\breve{H}$ is a Hilbert space and the operator $T:\breve{H}\to E$
is injective, hence $K$ is not contained in the image $T(\breve{H})=A(H)$.

Now we find a countable sequence $S=(x_n)$ in $E$ converging to zero and such that $K$ is contained in the closed 
convex hull of $S$. If an ellipsoid contains $S$, then it also contains $K$, which is impossible.
\end{proof}

Note that in fact a sequence with properties stated in Lemma~\ref{l:1} can be found in any Banach space
$E$ not isomorphic to a Hilbert space. For the proof one can use, for example,
the following: in every Banach space which is non-isomorphic to a Hilbert space there is a sequence $F_n$ of finite-dimensional subspaces 
whose Banach-Mazur distances from the Euclidean spaces of the same dimension go to infinity \cite{Joichi}.

\begin{lemma}
\label{l:factor_hilbert}
Let $L$ be a multi-Hilbert LCS. Then every continuous linear map $f: L\to E$ from $L$ to a Banach space $E$
can be represented as a composition $f=p\circ g$, where $g:L\to H$ and $p:H\to E$ are two continuous linear maps
and $H$ is a Hilbert space.
\end{lemma}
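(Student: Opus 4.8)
The plan is to exploit the multi-Hilbert hypothesis to produce a single continuous Hilbertian seminorm on $L$ that dominates $\|f(\cdot)\|_E$, and then to pass to the Hilbert space attached to that seminorm.

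First I would make the hypothesis concrete. Being multi-Hilbert means there is a linear topological embedding $L\hookrightarrow\prod_{i\in I}H_i$ with each $H_i$ a Hilbert space; let $\pi_i\colon L\to H_i$ denote the (continuous linear) coordinate maps. For finite $F\subset I$ set
$$
q_F(x)=\Bigl(\sum_{i\in F}\|\pi_i(x)\|_{H_i}^2\Bigr)^{1/2},
$$
which is a continuous seminorm on $L$ arising from the positive semidefinite bilinear form $\sum_{i\in F}\langle\pi_i x,\pi_i y\rangle$ --- in other words, a Hilbertian seminorm. Since basic neighbourhoods of $0$ in the product topology constrain only finitely many coordinates, the directed family $\{q_F:F\subset I\text{ finite}\}$ is a fundamental system of continuous seminorms for $L$.

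Next I would use continuity of $f$: the preimage under $f$ of the open unit ball of $E$ is a neighbourhood of $0$, so there are a finite $F$ and an $\varepsilon>0$ with $q_F(x)<\varepsilon\implies\|f(x)\|_E<1$, and the usual homogeneity argument upgrades this to $\|f(x)\|_E\le\varepsilon^{-1}q_F(x)$ for every $x\in L$. Put $q=\varepsilon^{-1}q_F$, still continuous and Hilbertian, and $N=\{x\in L:q(x)=0\}$. Then $L/N$, equipped with the norm induced by $q$, is a pre-Hilbert space; let $H$ be its completion, a Hilbert space, and let $g\colon L\to H$ be the composition of the quotient map $L\to L/N$ with the inclusion $L/N\hookrightarrow H$. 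Then $g$ is continuous and linear.

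Finally, from $\|f(x)\|_E\le q(x)$ we get $N\subseteq\ker f$, so $f$ factors through the quotient as $f=\bar f\circ(L\to L/N)$ with $\bar f\colon L/N\to E$ linear of norm $\le 1$; since $E$ is complete and $L/N$ is dense in $H$, the bounded map $\bar f$ extends uniquely to a continuous linear map $p\colon H\to E$, and then $f=p\circ g$, as required. The only genuinely load-bearing step is the first one --- reading off a fundamental system of Hilbertian seminorms from the embedding into a product of Hilbert spaces --- and that is exactly where the multi-Hilbert assumption is used; the remainder is the standard passage to the Hilbert space associated with a continuous Hilbertian seminorm, together with completeness of $E$.
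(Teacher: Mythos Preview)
Your proof is correct and follows essentially the same route as the paper's: both reduce to finitely many coordinates of the product $\prod_i H_i$ and factor $f$ through the resulting Hilbert space. The only cosmetic difference is that the paper takes $H$ to be the closure of the image of $L$ inside the finite product $\prod_{i\in F}H_i$, whereas you build $H$ as the completion of $L/N$ with respect to the Hilbertian seminorm $q_F$; these two Hilbert spaces are canonically isometric, so the arguments coincide.
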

\begin{proof} We assume that $L$ is isomorphically embedded into the product $\prod_{i\in I} E_i$, where all $E_i$ are Hilbert spaces.
Denote by $V$ the unit open ball of $E$. Since $f$ is continuous, and by the definition of the product topology,
there is a finite face $E_A =\prod_{i\in A} E_i$ of the product $\prod_{i\in I} E_i$, and a neighborhood $U$ of zero in $E_A$,
 such that $f(p^{-1}(U))$ is contained in $V$.
Here the set of indexes $A\subset I$ is finite and $p: X \to E_A$ is the corresponding projection.

We claim that  $\ker(p) \subset \ker(f)$. Indeed, if $x \in L$ and $p(x)=0$, then $p(tx)= 0$ for every scalar $t$, so $tx \in p^{-1}(U))$.
This means that $f(tx) =t(f(x))$ belongs to the same unit ball $V$ for every scalar $t$, which implies that $f(x)=0$.
 So there is some map $g: p(X)\to E$ such that $f=g\circ p$. Clearly, $g$ is a linear map. It is also bounded, because $g(U)= f(p^{-1}(U))$
 was assumed to be contained in $V$. The map $g$ is defined on the vector subspace $p(X)$ of the Hilbert space $E_A$.
 (The LCS $E_A$ is isomorphic to a Hilbert space being a finite product of Hilbert spaces). 
Further, the map $g$ can be extended by continuity to the closure of that vector space in $E_A$.
A closed vector subspace of a Hilbert space is  Hilbert itself, and we get the required factorization.
\end{proof}

\begin{fact} 
\label{l:sequence}
Let $S$ be a non-trivial countable convergent sequence. 
 Then $L(S)$ is not multi-Hilbert.
\end{fact}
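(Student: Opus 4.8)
The plan is to combine the defining (universal) property of $L(S)$ with Lemmas~\ref{l:1} and~\ref{l:factor_hilbert}. Write $S=\{s_n:n\in\N\}\cup\{s_\infty\}$ with $s_n\to s_\infty$, and let $E$ be a Banach space carrying a null sequence $(x_n)$ that is not contained in any ellipsoid, as provided by Lemma~\ref{l:1}. Since $x_n\to 0$, the assignment $s_n\mapsto x_n$, $s_\infty\mapsto 0$ defines a continuous map $f\colon S\to E$, which by the defining property of $L(S)$ extends to a continuous linear map $\widehat f\colon L(S)\to E$. Suppose, towards a contradiction, that $L(S)$ is multi-Hilbert. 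Then Lemma~\ref{l:factor_hilbert} applies to $\widehat f$ and produces a factorization $\widehat f=p\circ g$, where $g\colon L(S)\to H$ and $p\colon H\to E$ are continuous linear maps and $H$ is a Hilbert space.

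Now set $v_n:=g(s_n)-g(s_\infty)\in H$. Since $g$ is continuous and $s_n\to s_\infty$ in $S$, we have $v_n\to 0$ in $H$; in particular $R:=\sup_n\norm{v_n}<\infty$, so every $v_n$ lies in the closed ball of radius $R$ about the origin in $H$. Moreover $p(v_n)=\widehat f(s_n)-\widehat f(s_\infty)=x_n-0=x_n$ for every $n$. The closed ball of radius $R$ in $H$ is the image of the closed unit ball of $H$ under the bounded operator $R\cdot\mathrm{id}_H$, hence it is an ellipsoid; composing with the bounded operator $p$, its image under $p$ is again an ellipsoid, now in $E$, and it contains the entire sequence $\{x_n\}=\{p(v_n)\}$. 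This contradicts the choice of $(x_n)$ in Lemma~\ref{l:1}, so $L(S)$ is not multi-Hilbert.

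There is no genuinely hard step here: the analytic content is entirely carried by Lemmas~\ref{l:1} and~\ref{l:factor_hilbert}. The two points that need a moment's care are, first, that $g(s_\infty)$ need not vanish, which is the reason for passing to the translated sequence $v_n=g(s_n)-g(s_\infty)$ rather than to $g(s_n)$ itself (one only uses that $v_n\to 0$ in $H$ and that $p(v_n)=x_n$); and second, the elementary but indispensable observation that \emph{every} ball in a Hilbert space is an ellipsoid, so that any bounded subset of $H$ — in particular any convergent sequence — lies inside an ellipsoid, and so does its image under a bounded operator into $E$.
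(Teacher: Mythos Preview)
Your proof is correct and follows essentially the same route as the paper's: send $S$ to a null sequence in $E$ not contained in any ellipsoid (Lemma~\ref{l:1}), extend linearly, factor through a Hilbert space via Lemma~\ref{l:factor_hilbert}, and observe that the image under $p$ of a ball in $H$ is an ellipsoid containing the sequence. The translation by $g(s_\infty)$ is harmless but unnecessary, since $g(S)$ is compact (hence bounded) in $H$ and therefore already lies in a ball centered at the origin.
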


\begin{proof}
On the contrary, suppose that $L(S)$ is multi-Hilbert. By Lemma~\ref{l:1} we define
a 1-to-1 continuous map $f:S\to E$ such that $f(S)$ is not contained in an ellipsoid.
Extend $f$ to a linear continuous map $\widehat f: L(S)\to E$.
Now, by Lemma~\ref{l:factor_hilbert}, we can represent $\widehat f$
as a composition $\widehat f=p\circ \widehat g$, where $\widehat g:L(S) \to H$ and $p:H\to E$ are two continuous linear maps,
$\widehat g$ extends the map $g:S\to H$ and $H$ is a Hilbert space. The following diagram illustrates our construction. 

$$
\xymatrix{
&H \ar[d]_p\\
S\ar[r]_f \ar[ur]^g&E
}
\qquad \qquad\qquad
\xymatrix{
&H \ar[d]_p\\
L(S)\ar[r]_{\widehat f} \ar[ur]^{\widehat g}&E
}
$$
The compact set $g(S)\sbs H$ is contained in a ball, hence $f(S)=p(g(S))$ is contained in
an ellipsoid. The obtained contradiction finishes the proof.
\end{proof}
\noindent
{\bf Proof of Theorem \ref{t:main}.}
Let $K$ be an infinite compact subset of $X$ and assume that $L(X)$ is multi-Hilbert.
As we have noted before $L(K)$ can be identified with a linear subspace of $L(X)$, hence $L(K)$ is multi-Hilbert. 
$K$ is an infinite compact space, therefore we can choose a continuous map $\pi:K \to [0,1]$ with an infinite image.
Let $M =\pi(K)$. 
We have that $\pi:K \to M$ is a closed (hence, quotient) continuous surjective mapping. 
In this situation $\pi$ lifts to a linear continuous quotient mapping $\widehat \pi$ from $L(K)$ onto $L(M)$.
The latter fact can be proved analogously to \cite[Proposition 1.8]{Okunev}.
Since the class of multi-Hilbert LCS is closed under Hausdorff quotients we conclude that $L(M)$ is also multi-Hilbert.
However, $M$ is an infinite compact subset of the segment $[0,1]$, therefore $M$ contains a copy of a convergent sequence $S$.
This would mean that $L(S)$ is multi-Hilbert contradicting Fact \ref{l:sequence}. The obtained contradiction finishes the proof of Theorem \ref{t:main}.
\,\,\,\,\,$\Box$

Recall that a topological space $X$ is called a {\it $k$-space} whenever $F \subset X$ is closed iff the intersection $F \cap K$ is closed in $K$
for every compact $K \subset X$. If $X$ can be covered by countably many compact subsets $K_n$ 
such that $F\sbs X$ is closed iff all intersections $F\cap K_n$ are closed, then $X$ is said to be
a {\it $k_\omega$-space} \cite{k_omega}.

An LCS $E$ is called {\em strongly nuclear} \cite[1.4]{Randtke1} if for every continuous seminorm $q$ on $E$
there exist a rapidly decreasing sequence $(\lambda_n)$ (this means that $\sum n^k|\lambda_n|<\infty$
for every $k$) and an equicontinuous sequence $(a_n)$ of linear functionals on $E$ such that
$q(x)\le \sum |\lambda_n|\langle x,a_n\rangle|$ for all $x\in E$. It is easy to verify that 
$L(\mathbb N)=\varphi$ is strongly nuclear, where 
$\mathbb N$ denotes the discrete space of natural numbers.

Following \cite{DMS} denote by $\sV(\varphi)$ the variety of LCS generated by $\varphi$.
Note that $\sV(\varphi)$ is the (unique) second smallest variety, in the sense that every
variety properly containing $\sV(\R)$ contains $\sV(\varphi)$ \cite{DMS}.

\begin{corollary} \label{cor:1} Let $X$ be a $k$-space. The following conditions are equivalent:
\begin{itemize}
\item [(i)] $L(X)$ is strongly nuclear;
\item [(ii)] $L(X)$ is nuclear;
\item [(iii)] $L(X)$ is multi-Hilbert;
\item [(iv)] $X$ is a countable discrete space. 
\end{itemize}
\end{corollary}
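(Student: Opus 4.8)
The plan is to establish the cycle $\mathrm{(i)}\Rightarrow\mathrm{(ii)}\Rightarrow\mathrm{(iii)}\Rightarrow\mathrm{(iv)}\Rightarrow\mathrm{(i)}$, in which only $\mathrm{(iii)}\Rightarrow\mathrm{(iv)}$ carries any weight; the other three implications are immediate from facts already collected above. For $\mathrm{(i)}\Rightarrow\mathrm{(ii)}$ it suffices to observe that a rapidly decreasing sequence $(\lambda_n)$ is in particular absolutely summable (the case $k=0$ of $\sum n^k|\lambda_n|<\infty$), so the seminorm inequality witnessing strong nuclearity of $L(X)$ is a special case of the one witnessing nuclearity; hence strongly nuclear implies nuclear. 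The implication $\mathrm{(ii)}\Rightarrow\mathrm{(iii)}$ is precisely the Grothendieck permanence property ``every nuclear LCS is multi-Hilbert'' listed in the introduction. For $\mathrm{(iv)}\Rightarrow\mathrm{(i)}$: if $X$ is countable and discrete then $L(X)$ is $\varphi$ (when $X$ is infinite) or $\R^n$ with $n=|X|$ (when $X$ is finite), and in either case $L(X)$ is isomorphic to a subspace of $\varphi$; since $\varphi$ is strongly nuclear (as noted above) and the strongly nuclear spaces form a variety closed under subspaces, $L(X)$ is strongly nuclear.

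Thus the work is in $\mathrm{(iii)}\Rightarrow\mathrm{(iv)}$. First I would rule out nontrivial compact subsets: if $L(X)$ is multi-Hilbert then, by Theorem~\ref{t:main}, $X$ contains no infinite compact subset, so every compact subset of $X$ is finite, hence closed. Now I would invoke the hypothesis that $X$ is a $k$-space: any $A\subseteq X$ meets every compact $K\subseteq X$ in a finite, hence closed, subset of $K$, so $A$ is closed in $X$; therefore $X$ is discrete. It remains to prove that a discrete $X$ with $L(X)$ multi-Hilbert must be countable, and this is the crux --- note that Fact~\ref{l:sequence} and Theorem~\ref{t:main} are useless here, since an uncountable discrete space has no nontrivial convergent sequences and no infinite compact subsets at all.

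So assume, for contradiction, that $X$ is discrete and uncountable; write $\Gamma$ for its underlying set. Then $L(\Gamma)=\bigoplus_{\gamma\in\Gamma}\R$ carries the finest locally convex topology, so every linear map out of $L(\Gamma)$ is continuous; let $e_\gamma\in L(\Gamma)$ denote the canonical generators. I would apply Lemma~\ref{l:factor_hilbert} to the (continuous, injective) linear map $f\colon L(\Gamma)\to\ell^1(\Gamma)$ with $f(e_\gamma)=e_\gamma$ (the unit vector): one gets a Hilbert space $H$ and continuous linear maps $g\colon L(\Gamma)\to H$, $p\colon H\to\ell^1(\Gamma)$ with $f=p\circ g$. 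Writing $\pi_\gamma$ for the $\gamma$-th coordinate functional on $\ell^1(\Gamma)$ and using the Riesz representation, pick $w_\gamma\in H$ with $\langle x,w_\gamma\rangle=\pi_\gamma(p(x))$ for all $x\in H$; boundedness of $p$ gives
$$
\sum_{\gamma\in\Gamma}|\langle x,w_\gamma\rangle|=\|p(x)\|_1\le\|p\|\,\|x\|\qquad(x\in H).
$$
The key estimate is then a Rademacher average: for any finite $F\subseteq\Gamma$ and signs $\varepsilon_\gamma=\pm1$, the vector $x=\sum_{\gamma\in F}\varepsilon_\gamma w_\gamma$ satisfies $\|x\|^2=\sum_{\gamma\in F}\varepsilon_\gamma\langle x,w_\gamma\rangle\le\|p\|\,\|x\|$, hence $\|x\|\le\|p\|$; averaging $\|x\|^2$ over the sign choices yields $\sum_{\gamma\in F}\|w_\gamma\|^2\le\|p\|^2$, and since $F$ is arbitrary, $\sum_{\gamma\in\Gamma}\|w_\gamma\|^2\le\|p\|^2<\infty$. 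Consequently $w_\gamma=0$ for all but countably many $\gamma$. On the other hand $\langle g(e_\gamma),w_\gamma\rangle=\pi_\gamma(f(e_\gamma))=1$ for every $\gamma$, so $w_\gamma\neq0$ for every $\gamma\in\Gamma$ --- contradicting uncountability of $\Gamma$. Hence $X$ is countable, and together with discreteness this is $\mathrm{(iv)}$.

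I expect the main obstacle to be exactly this last paragraph: ruling out the uncountable discrete case, where the compact-subset machinery of Section~\ref{s:nuclear} gives nothing. The remedy is to apply the factorization Lemma~\ref{l:factor_hilbert} to a map into $\ell^1(\Gamma)$ and then exploit the elementary fact that a family $(w_\gamma)_{\gamma\in\Gamma}$ in a Hilbert space for which $x\mapsto(\langle x,w_\gamma\rangle)_{\gamma}$ is bounded into $\ell^1(\Gamma)$ is automatically square-summable in norm --- hence supported on a countable set --- which is incompatible with the ``biorthogonality'' $\langle g(e_\gamma),w_\gamma\rangle=1$ forced by the factorization. Everything else is routine bookkeeping with the permanence properties and the definitions already recorded.
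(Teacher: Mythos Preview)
Your argument is correct. The overall scaffolding matches the paper's: only $\mathrm{(iii)}\Rightarrow\mathrm{(iv)}$ is nontrivial, and you reduce to discreteness of $X$ exactly as the paper does, via Theorem~\ref{t:main} and the $k$-space hypothesis.

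Where you diverge is in the countability step. The paper does not argue directly; it invokes the forward reference to Fact~\ref{l:exampleD}, observing that multi-Hilbert implies multi-reflexive and that $L(X)$ fails to be multi-reflexive for uncountable discrete $X$ (the proof there uses that the canonical basis of $l^1(\Gamma)$ is closed and discrete in the weak topology, so cannot be covered by countably many weakly compact sets). Your route is genuinely different: you stay entirely inside the multi-Hilbert framework, apply Lemma~\ref{l:factor_hilbert} to the identity-on-generators map $L(\Gamma)\to l^1(\Gamma)$, and then use a clean Rademacher-average estimate to force $\sum_\gamma\|w_\gamma\|^2<\infty$, which is incompatible with the biorthogonality $\langle g(e_\gamma),w_\gamma\rangle=1$ for uncountably many $\gamma$. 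The advantage of your approach is self-containment --- no forward reference, no weak-topology or Schur-property considerations, and no detour through multi-reflexivity. The paper's approach, on the other hand, yields the stronger conclusion that $L(X)$ is not even multi-reflexive for uncountable discrete $X$, which it needs anyway in Section~\ref{s:multi}.
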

\begin{proof} Only the implication (iii) $\implies$ (iv) is new and needs to be proved.
By Theorem \ref{t:main}, every compact subset of $X$ is finite, therefore the $k$-space $X$ is discrete.
Every multi-Hilbert space is multi-reflexive, and we will show later
that $L(X)$ is not multi-reflexive if $X$ is an uncountable discrete space (Fact \ref{l:exampleD}).
\end{proof}

Besides the requirement that $X$ should contain no
infinite compact subspaces, there is another necessary condition for $L(X)$ to be nuclear.

\begin{theorem} \label{t:2}
Let $L(X)$ be a nuclear LCS. Then every metrizable image of $X$ under a continuous map must be separable.
\end{theorem}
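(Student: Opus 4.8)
The plan is to argue by contradiction: from a non-separable metrizable image of $X$ I would manufacture a single continuous linear map from $L(X)$ to a Banach space whose range is forced to be non-separable, which is incompatible with nuclearity. The ingredient I would isolate first is the soft remark that a nuclear linear map $u:E\to B$ from an LCS $E$ to a Banach space $B$ has separable range. This is immediate from Definition~\ref{d:nuclear}: writing $u(x)=\sum_n\lambda_nf_n(x)y_n$ with $\sum|\lambda_n|<\infty$, $(f_n)$ equicontinuous and $(y_n)$ bounded, the series converges absolutely in $B$ for each $x$ (use equicontinuity of $(f_n)$ and boundedness of $(y_n)$), and every partial sum lies in $\mathrm{span}\{y_n:n\in\N\}$, so $u(E)$ is contained in the separable closed subspace $\overline{\mathrm{span}}\{y_n:n\in\N\}$ of $B$. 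Hence, if $L(X)$ is nuclear, the image of \emph{every} continuous linear map from $L(X)$ into a Banach space is separable.

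Next I would set up the contradiction. Suppose $g:X\to Z$ is continuous and the image $M:=g(X)$, with the subspace topology, is metrizable but not separable; regard $g$ as a continuous surjection onto $M$ and fix a metric $d\le 1$ on $M$ compatible with its topology. A non-separable metric space contains, for some $\epsilon>0$, an uncountable $\epsilon$-separated set: otherwise the union of countably many maximal $(1/n)$-separated subsets (which exist by Zorn's lemma) would be a countable dense subset of $M$. So fix such an $\epsilon>0$ and an uncountable $\{m_\alpha:\alpha\in A\}\subset M$ with $d(m_\alpha,m_\beta)\ge\epsilon$ for $\alpha\ne\beta$, and choose $x_\alpha\in g^{-1}(m_\alpha)$ for each $\alpha$.

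The key construction is the ``fan'' of distance functions. Put $B=\ell^\infty(A)$, the Banach space of bounded real functions on $A$, and define $h:M\to B$ by $h(m)=\bigl(d(m,m_\alpha)\bigr)_{\alpha\in A}$. Each coordinate $m\mapsto d(m,m_\alpha)$ is $1$-Lipschitz, so $h$ is $1$-Lipschitz (hence continuous) and $h(m)\in[0,1]^A\subset\ell^\infty(A)$; inspecting the $\alpha$-th coordinate gives $\norm{h(m_\alpha)-h(m_\beta)}_\infty\ge d(m_\alpha,m_\beta)\ge\epsilon$ whenever $\alpha\ne\beta$. Then $h\circ g:X\to B$ is continuous, so by the universal property of $L(X)$ it extends to a continuous linear map $\Phi:L(X)\to B$ with $\Phi(x)=h(g(x))$ for $x\in X$; in particular $\Phi(x_\alpha)=h(m_\alpha)$ for every $\alpha\in A$.

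Finally I would close the argument: since $L(X)$ is nuclear, $\Phi$ is nuclear, so $\Phi(L(X))$ is separable; but it contains the uncountable $\epsilon$-separated set $\{h(m_\alpha):\alpha\in A\}$, and a separable metric space has no uncountable $\epsilon$-separated subset (the $\epsilon/3$-balls around such points are pairwise disjoint). This contradiction proves the theorem. I do not expect any real obstacle: the whole proof is one pigeonhole step once the target Banach space is chosen well. The only points needing minor care are the reduction to the surjective case $M=g(X)$ (so that each $m_\alpha$ genuinely lies in the range of $\Phi$) and the use of a bounded compatible metric, so that $h$ actually lands in $\ell^\infty(A)$. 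As an alternative to the explicit space $\ell^\infty(A)$, one could take $B$ to be the Lipschitz-free (Arens--Eells) Banach space of $(M,d)$, into which $M$ embeds isometrically, and run the identical argument.
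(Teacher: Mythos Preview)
Your argument is correct and follows essentially the same approach as the paper: both rest on the observation that every nuclear map into a Banach space has separable range, combined with a Kuratowski-type embedding of $M$ into a Banach space via distance functions. The paper's version is slightly more streamlined---it embeds $M$ isometrically into $C_b(M)$ by the Kuratowski map $x\mapsto d(x,\cdot)-d(a,\cdot)$ and concludes directly that $M$ is separable as a subset of the (separable) range, avoiding your detour through an $\varepsilon$-separated set and the contradiction wrapper.
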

\begin{proof} Let $f:X\to M$ be a continuous map onto a metrizable space $M$.
Assume that the topology of $M$ is generated by a metric $d$. Denote by $B=C_{b}(M)$ the Banach space
 of all bounded continuous real-valued functions on $M$ equipped with the supremum norm.
It is known that the Kuratowski mapping isometrically embeds $M$ into the Banach space $B$.
Specifically, if $(M,d)$ is a metric space, $a$ is a fixed point in $M$, then the mapping $\Phi :M\rightarrow C_{b}(M)$
defined by
$$\Phi (x)(y)=d(x,y)-d(a,y)\quad {\mbox{for all}}\quad x,y\in M,$$
is an isometry.

Extend the map $f:X\to M$ to a linear continuous
map $\widehat f:L(X)\to B$. It follows from the definition of a nuclear map that 
the range of $\widehat f$ is separable. Since $M$ is contained in that range, it is separable, too.
\end{proof}

It follows from Theorem~\ref{t:3} below that there exist non-separable spaces $X$ such that 
$L(X)$ are strongly nuclear, hence nuclear. We say that a Tychonoff space $X$ is {\em projectively countable} if every metrizable image 
of $X$ under a continuous map is countable.  Recall that 
a {\em $P$-space} is a topological space in which the intersection of countably many open sets is open.
To see that Lindel\"of $P$-spaces are projectively countable, observe that if $X$ is a $P$-space, all points of $Y$ are $G_{\delta}$,
and $f: X \to  Y$ is a continuous map, then the inverse
images of points from $Y$ form a disjoint open cover of $X$.
It is known that {\it $\omega$-modification} of a topology of any scattered Lindel\"of space produces a Lindel\"of $P$-space.
A concrete example of an uncountable Lindel\"of $P$-space is the so-called {\it one-point Lindel\"ofication} $X=\Gamma \cup \{p\}$ of an uncountable discrete 
space $\Gamma$. All open neighborhoods of the point $p$ in X are of the form $A \cup \{p\}$, where $A \subset \Gamma$ and $|\Gamma \setminus A| \leq \aleph_0$.

\begin{theorem}
\label{t:3}
If $X$ is a projectively countable $P$-space (in particular, a Lindel\"of $P$-space), then $L(X)$ is strongly nuclear.
\end{theorem}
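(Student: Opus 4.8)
The plan is to verify the defining inequality of strong nuclearity directly, working from the known description of continuous seminorms on $L(X)$ in terms of continuous pseudometrics on $X$. Recall (Graev's construction, see \cite{Usp1}) that every continuous seminorm $q$ on $L(X)$ is dominated by the Graev seminorm $\|\cdot\|_d$ associated with some continuous pseudometric $d$ on $X$, where $\|\cdot\|_d$ is the largest seminorm on $L(X)$ whose restriction to $X\times X$ is bounded by $d$; explicitly $\|\xi\|_d=\sup\{|\widehat f(\xi)|: f:X\to\R,\ f \text{ is }1\text{-Lipschitz for }d\}$. Thus it suffices to show: for every continuous pseudometric $d$ on $X$ there is a rapidly decreasing sequence $(\lambda_n)$ and an equicontinuous sequence $(a_n)$ of functionals on $L(X)$ with $\|\xi\|_d\le\sum_n|\lambda_n|\,|\langle\xi,a_n\rangle|$ for all $\xi\in L(X)$.

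The first step is to exploit projective countability. Given $d$, form the metric quotient $M_d=(X/{\sim_d},\bar d)$ and let $q_d:X\to M_d$ be the (continuous) quotient map. By hypothesis $M_d$ is a countable metric space; moreover $X$ is a $P$-space, and I claim the induced map is such that $\bar d$ takes only countably many values and, more importantly, that $M_d$ is a countable metric space in which every point has a neighbourhood base and — being a countable metric space — is in particular zero-dimensional and $\sigma$-discrete. The key structural input I want is that $M_d$, being countable, can be written as $M_d=\{p_0,p_1,p_2,\dots\}$, and that the Graev seminorm $\|\cdot\|_d$ on $L(X)$ factors through the continuous linear map $\widehat{q_d}:L(X)\to L(M_d)$ followed by the Graev norm of $(M_d,\bar d)$. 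Since strong nuclearity is inherited by subspaces and preimages under the variety operations (the class of strongly nuclear LCS is a variety, as noted after Definition~\ref{d:Sch} for the analogous classes, and this holds for strongly nuclear spaces by \cite{Randtke1}, \cite{DMS}), it is enough to prove that $L(M_d)$ is strongly nuclear for every countable metric space $(M_d,\bar d)$ — or even just to produce the required functionals for the single seminorm $\|\cdot\|_{\bar d}$ on $L(M_d)$.

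The second, and main, step is therefore the following reduced statement: \emph{if $M=\{p_k\}_{k\ge 0}$ is a countable metric space with metric $\rho$, then the Graev norm $\|\cdot\|_\rho$ on $L(M)$ is dominated by $\sum_n|\lambda_n|\,|\langle\cdot,a_n\rangle|$ for suitable rapidly decreasing $(\lambda_n)$ and equicontinuous $(a_n)$.} Here I would use that a $1$-Lipschitz function $f:M\to\R$ (normalized by $f(p_0)=0$) is determined by its values $f(p_k)$, which range over a bounded-by-$\rho(p_k,p_0)$ interval; writing a general $\xi\in L(M)$ as a finite sum $\sum_k c_k p_k$, we have $\widehat f(\xi)=\sum_k c_k f(p_k)$, so $\|\xi\|_\rho=\sup_f|\sum_k c_k f(p_k)|$. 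The plan is to bound this supremum by a weighted $\ell^1$-type expression in the coordinate functionals $e_k^*:\xi\mapsto c_k$ (which are equicontinuous on $L(M)$ because $|c_k|\le$ const$\cdot\|\xi\|_{\rho'}$ for an appropriate discrete pseudometric $\rho'$), combined with the functionals $\xi\mapsto\sum_k c_k\,\mathbf 1_{A}(p_k)$ for basic clopen sets $A$. Enumerating a countable equicontinuous family of such functionals as $(a_n)$ and choosing a single rapidly decreasing weight sequence $(\lambda_n)$ that decays fast enough to absorb the growth of $\rho(p_k,p_0)$ (re-indexing/grouping so that the $k$-th block of functionals is weighted by something like $k^{-k}$) gives the desired estimate. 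The hard part will be making this last absorption uniform: I must choose $(\lambda_n)$ \emph{rapidly} decreasing (so $\sum_n n^j|\lambda_n|<\infty$ for every $j$) while still dominating $\|\cdot\|_\rho$, and this forces careful bookkeeping relating the enumeration of $M$ to the enumeration of the functionals $(a_n)$, using that only countably many "scales" of $\rho$ occur precisely because $M$ is countable. Once the countable case is done, the $P$-space hypothesis is what licenses the reduction in the first step, since it is what makes every metrizable continuous image genuinely reduce to such a countable $M$.
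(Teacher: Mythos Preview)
Your reduction via the metric quotient $M_d$ is the right opening, but the ``main step'' cannot be carried out as written. The stronger form you propose --- that $L(M_d)$ is strongly nuclear for every countable metric space $M_d$ --- is simply false: take $M_d$ to be a nontrivial convergent sequence; by Proposition~\ref{p: observ} (or Fact~\ref{l:sequence}) $L(M_d)$ is then not even nuclear. In the weaker form (dominating only the single Graev seminorm $\|\cdot\|_{\bar d}$ by $\sum|\lambda_n|\,|a_n|$ with $(a_n)$ equicontinuous on $L(M_d)$), your proposed functionals already fail: the coordinate functional $e_k^*:\sum_j c_jp_j\mapsto c_k$ corresponds, as an element of the dual $C(M_d)$ of $L(M_d)$, to the characteristic function $\mathbf 1_{\{p_k\}}$, and this is continuous on $M_d$ only when $p_k$ is isolated. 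There is no continuous discrete pseudometric $\rho'$ on a non-discrete $M_d$, so the asserted equicontinuity of the family $(e_k^*)$ on $L(M_d)$ collapses. The ``hard part'' you flag downstream is therefore moot --- the construction breaks before you reach it.

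The missing idea is the correct role of the $P$-space hypothesis, which you conflate with projective countability in your last sentence. Projective countability by itself yields only that $M_d$ is countable. The $P$-space property does separate and essential work: each singleton in the metric space $M_d$ is $G_\delta$, so each fiber $q_d^{-1}(p_k)$ is a $G_\delta$ subset of the $P$-space $X$ and hence \emph{open}. Thus $X$ decomposes into countably many clopen fibers and $q_d:X\to M_d$ factors through the discrete space $\mathbb N$. Consequently every continuous linear map $L(X)\to B$ to a Banach space factors as $L(X)\to L(\mathbb N)=\varphi\to B$; since $\varphi$ is strongly nuclear and strongly nuclear spaces form a variety, $L(X)$ embeds in a power of $\varphi$ and is strongly nuclear. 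This is precisely the paper's argument, and once the factorization through a \emph{discrete} countable space is in hand your attempted direct seminorm estimate becomes unnecessary.
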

\begin{proof}
Let $M$ be a metrizable space and $f:X\to M$ be a continuous map. Then $f(X)$ is countable.
Every point $m \in M$ is a $G_{\delta}$-set in $M$,
therefore every fiber $f^{-1}(m)$ is open in the $P$-space $X$. 
We see that $X$ is the disjoint countable union of open fibers $f^{-1}(m)$, $m\in f(X)$.
It follows that every continuous map $f:X\to M$ admits a factorization $X\to \mathbb N\to M$, where $\mathbb N$ is the
discrete space of natural numbers, hence any linear continuous map $L(X)\to B$ to a Banach space $B$ 
admits a factorization $L(X)\to L(\mathbb N)\to B$.
Using \cite[Remark 3.4]{Randtke1} we can conclude that $L(X)$ can be isomorphically embedded into a power of the space $\varphi$.
This means that $L(X)$ belongs to $\sV(\varphi)$ which is a sub-variety of the variety of all strongly nuclear spaces \cite{DMS},
since $L(\mathbb N)=\varphi$ is strongly nuclear.
\end{proof}

It turns out that there are projectively countable $P$-spaces which are not Lindel\"of $P$-spaces. 
A Tychonoff space $X$ is called {\em cellular-Lindel\"of} if for every disjoint family $\sU$
of open nonempty subsets of $X$ there is a Lindel\"of subspace $L$ of $X$ such that $L$ 
meets every member of the family $\sU$. It is easy to see that every cellular-Lindel\"of $P$-space
is projectively countable. Hence, Theorem \ref{t:3} implies

\begin{proposition}
\label{p:2}
If $X$ is a cellular-Lindel\"of $P$-space, then $L(X)$ is a strongly nuclear LCS.
\end{proposition}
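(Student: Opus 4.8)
The plan is to reduce Proposition~\ref{p:2} to Theorem~\ref{t:3} by proving the purely topological statement that every cellular-Lindel\"of $P$-space is projectively countable; once that is done, the conclusion is immediate since a projectively countable $P$-space has strongly nuclear $L(X)$ by Theorem~\ref{t:3}. So let $X$ be a cellular-Lindel\"of $P$-space, let $M$ be a metrizable space, and let $f:X\to M$ be a continuous map; I must show that $f(X)$ is countable. Replacing $M$ by the subspace $f(X)$ (a subspace of a metrizable space is metrizable), I may assume that $f$ is surjective.

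First I would observe that every fiber $f^{-1}(m)$, $m\in M$, is open in $X$: since $M$ is metrizable, the singleton $\{m\}$ is a $G_{\delta}$-subset of $M$, so $f^{-1}(m)$ is a $G_{\delta}$-subset of $X$, and in a $P$-space every $G_{\delta}$-set is open. Thus $\sU=\{f^{-1}(m):m\in M\}$ is a cover of $X$ by pairwise disjoint, nonempty, open sets (nonemptiness uses the surjectivity arranged above).

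Next I would apply the cellular-Lindel\"of property of $X$ to the disjoint family $\sU$: there is a Lindel\"of subspace $L\sbs X$ that meets every member of $\sU$. Then $\{L\cap f^{-1}(m):m\in M\}$ is a cover of $L$ by pairwise disjoint, nonempty, relatively open subsets of $L$. Since $L$ is Lindel\"of, this cover has a countable subcover; but any subcover of a pairwise disjoint cover must contain all of its nonempty members, so the cover itself is countable. Hence $\abs{\sU}\le\aleph_0$, i.e. $M=f(X)$ is countable. This shows $X$ is projectively countable, and Theorem~\ref{t:3} finishes the proof.

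I expect no real obstacle here: the argument is short and the single point deserving care is the elementary fact that a pairwise disjoint open cover of a Lindel\"of space is countable, together with the bookkeeping about empty fibers (which is why one first passes to the image $f(X)$, ensuring every member of $\sU$ is nonempty). Everything else is a direct unwinding of the definitions of a $P$-space, of cellular-Lindel\"ofness, and of projective countability.
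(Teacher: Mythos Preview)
Your proof is correct and follows exactly the route the paper takes: the paper states just before Proposition~\ref{p:2} that ``it is easy to see that every cellular-Lindel\"of $P$-space is projectively countable'' and then invokes Theorem~\ref{t:3}, and you have simply supplied the (correct) details of that easy verification.
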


Another quick consequence provides sufficient conditions 
for the free abelian group $A(X)$ to be nuclear:

\begin{corollary}
\label{cor:2}
If $X$ is a cellular-Lindel\"of $P$-space, then $A(X)$ is a nuclear topological group.
\end{corollary}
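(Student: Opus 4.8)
The plan is to deduce Corollary~\ref{cor:2} directly from Theorem~\ref{t:3} together with two standard facts about free objects: first, that the natural map $A(X)\to L(X)$ embeds the free abelian topological group as a topological subgroup of the free locally convex space (this is recalled in the Introduction, citing \cite{Tk}, \cite{Usp1}); and second, that the class of nuclear topological groups is closed under passing to subgroups, and contains every nuclear LCS as a topological group (stated in the Introduction, following \cite{Aus}). So the entire argument is a two-line chain of implications, and no new analytic input is required.

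Concretely, I would argue as follows. Suppose $X$ is a cellular-Lindel\"of $P$-space. By Theorem~\ref{t:3} (applied via Proposition~\ref{p:2}, which is precisely the statement that $L(X)$ is strongly nuclear for such $X$), the free LCS $L(X)$ is strongly nuclear, and in particular nuclear. A nuclear LCS, viewed merely as an abelian topological group, is a nuclear group. Now $A(X)$ sits inside $L(X)$ as a topological subgroup, and a subgroup of a nuclear group is again a nuclear group. Hence $A(X)$ is a nuclear topological group, which is the assertion of the corollary.

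There is really no serious obstacle here: the corollary is a formal consequence of results already in place. If one wished to write it out with full care, the only point deserving a sentence of justification is the embedding $A(X)\hookrightarrow L(X)$, namely that the subspace topology inherited from $L(X)$ coincides with the intrinsic topology of $A(X)$ — but this is exactly the classical fact cited in the Introduction, so it may simply be invoked. The permanence properties of nuclear groups (closure under subgroups, and that nuclear LCS are nuclear groups) are likewise quoted from \cite{Aus}, so the proof amounts to stringing these together. If anything, the mild subtlety is purely bookkeeping: making sure one cites Proposition~\ref{p:2} or Theorem~\ref{t:3} for strong nuclearity of $L(X)$ rather than re-deriving it, and being explicit that ``nuclear'' (not just ``strongly nuclear'') is what one needs to land in the class of nuclear groups.
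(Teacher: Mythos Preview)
Your proposal is correct and matches the paper's intended argument: the paper presents this corollary without proof as ``another quick consequence'' of Proposition~\ref{p:2}, relying on precisely the facts you invoke---that $A(X)$ embeds as a topological subgroup of $L(X)$, that nuclear LCS are nuclear groups, and that the class of nuclear groups is closed under subgroups.
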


See \cite[Theorem 3.16]{Tkachuk} for an example of 
a cellular-Lindel\"of $P$-space which is not even weakly Lindel\"of.

\section{Schwartz and Multi-reflexive $L(X)$}\label{s:multi}
\bigskip
Recall that an LCS is {\em multi-reflexive} if it is isomorphic to a subspace of a product
of reflexive Banach spaces. For instance, every LCS endowed with its weak topology is multi-Hilbert (hence multi-reflexive),
 since it embeds into a product of the real lines. 
However, a reflexive LCS need not be multi-reflexive \cite[Remark 4.14 (a)]{Megr}.

Our proof of Theorem~\ref{t:main} is based on the fact that certain compact subsets of Banach spaces
are not contained in ellipsoids. However, 
every weakly compact subset of a Banach space is contained
in the image of the closed unit ball of a reflexive space under a bounded linear mapping (see \cite[Theorem 13.22]{Fabian}).
 This motivates the following question (suggested to us by Michael Megrelishvili):

\begin{question}\label{q:megrel}
When is $L(X)$ multi-reflexive?
\end{question}

We have seen that $L(X)$ is not multi-Hilbert if $X$ is an infinite compact space 
(Theorem~\ref{t:main}). 
Nevertheless, $L(X)$ is a Schwartz LCS for every compact space $X$~\cite{Chasco}. We provide an alternative proof of this statement
(Theorem~\ref{t:lx}) and due to the fact that every Schwartz LCS is multi-reflexive (Theorem~\ref{t:cnuc}), we
conclude that $L(X)$ is multi-reflexive for every compact space $X$ (Theorem~\ref{t:main2}).

In general, $L(X)$ need not be Schwartz or multi-reflexive.
 We provide  a complete characterization of first-countable paracompact spaces $X$ such that
$L(X)$ is Schwartz or multi-reflexive (Theorem \ref{t:main3}).
In particular, if $X$ is metrizable,
then $L(X)$ is multi-reflexive iff $L(X)$ is a Schwartz LCS iff $X$ is locally compact 
and has a countable base (Corollary~\ref{c:main}).

\begin{lemma}
\label{l:fac}
If $L$ is a Schwartz LCS, then every continuous linear map $L\to B$ to a Banach space $B$
admits a factorization $L\to B_1\to B$, where $B_1$ is a Banach space,
 $L\to B_1$ is a continuous linear map  and $B_1\to B$ is a compact operator.
\end{lemma}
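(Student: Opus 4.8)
The plan is to unwind the definition of a Schwartz space in the form of Definition~\ref{d:Sch}(a) together with a standard Banach-space factorization device, exactly parallel to the role played by Lemma~\ref{l:factor_hilbert} in the multi-Hilbert setting. So suppose $L$ is a Schwartz LCS and $f\colon L\to B$ is a continuous linear map into a Banach space $B$. Let $V$ be the closed unit ball of $B$. By continuity of $f$ there is a neighborhood $U$ of zero in $L$ with $f(U)\subset V$; we may assume $U$ is absolutely convex and closed. Since $L$ is Schwartz, the map $f$ is compact, so $\overline{f(U)}$ is a compact subset of $B$.

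The key step is to build the intermediate Banach space $B_1$ as a local Banach space of $L$ refined so that the image of its unit ball is \emph{precompact}. Concretely, using Definition~\ref{d:Sch}(b) choose an absolutely convex neighborhood $W$ of zero in $L$ with $W\subset U$ such that for every $\lambda>0$, $W$ is covered by finitely many translates of $\lambda U$. Let $p_W$ be the Minkowski functional of $W$; then $p_W$ is a continuous seminorm on $L$, and letting $N=p_W^{-1}(0)$ and $B_1=\widehat{L/N}$ be the completion of the normed space $(L/N, p_W)$, we obtain a Banach space together with the canonical continuous linear map $q\colon L\to B_1$ with $\|q(x)\|_{B_1}=p_W(x)$. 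Because $W\subset U\subset \ker(f)+U$ more precisely because $p_U\le p_W$ (as $W\subset U$), the map $f$ factors through $q$: writing $f=h\circ q$ defines $h$ on $q(L)$ with $h(q(W))\subset \overline{f(U)}$ first on the dense subspace $q(L)$ and then, by continuity, on all of $B_1$; and $\|h\|\le 1$ since $q(W)$ generates (a dense subset of) the unit ball of $B_1$. This is the routine part, identical in spirit to the argument already written out in Lemma~\ref{l:factor_hilbert}.

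The part that actually requires the Schwartz hypothesis — and the main thing to get right — is that $h\colon B_1\to B$ is a \emph{compact} operator, not merely bounded. For this I would show that the closed unit ball $\overline{q(W)}$ of $B_1$ has precompact image under $h$, equivalently that $h(q(W))=f(W)$ has compact closure in $B$ (this is where Definition~\ref{d:Sch}(a) applied to $W$ gives it for free, since $f$ is compact and $W$ is a neighborhood of zero), and then promote this from $q(W)$ to the whole unit ball of $B_1$. The promotion uses condition (b): given $\e>0$, pick $\lambda$ with $\lambda\,\overline{f(U)}\subset \tfrac{\e}{2}V$; cover $W$ by finitely many translates $x_i+\lambda U$; then $q(W)$ is covered by finitely many translates of $\lambda\,q(U)$, hence the unit ball of $B_1$ (the closed absolutely convex hull of $q(W)$) is covered, up to $\e/2$, by finitely many translates of $\lambda\, q(U)$, and applying $h$ shows $h(\text{unit ball of }B_1)$ is within $\e$ of a finite set; letting $\e\to0$ gives total boundedness, so the closure is compact and $h$ is a compact operator. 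I expect the only genuine obstacle to be bookkeeping: making sure the covering property of $W$ by translates of $\lambda U$ transfers cleanly to a covering of the \emph{closed absolutely convex hull} of $q(W)$ (one must absorb the translates' ``centers'' and the closure into an $\e$-perturbation), after which the compactness of $h$ drops out. Finally, replacing $B_1$ by the closure of $q(L)$ if one wants $q$ to have dense range is cosmetic, and the factorization $L\xrightarrow{q}B_1\xrightarrow{h}B$ with $h$ compact is the desired conclusion.
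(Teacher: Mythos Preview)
Your argument is correct and uses the same device as the paper --- the local Banach space associated to a continuous seminorm --- but the paper's version is one line, and your detour through Definition~\ref{d:Sch}(b) is unnecessary. The paper simply picks a balanced convex neighborhood $U$ of zero with $\overline{f(U)}$ compact (this exists by Definition~\ref{d:Sch}(a)) and sets $B_1=\widetilde L_U$, the completion of $(L/p_U^{-1}(0),p_U)$; the induced map $h\colon B_1\to B$ is then automatically compact, because the closed unit ball of $B_1$ is exactly the closure of the image of $U$, and $h$ sends it into the compact set $\overline{f(U)}$.

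Your ``promotion'' step rests on a misconception: once $B_1$ is built from an absolutely convex neighborhood $W$ via its Minkowski functional, the closed unit ball of $B_1$ \emph{is} the closure of $q(W)$ --- there is nothing larger to promote to. Hence $h(\text{closed unit ball})\subset\overline{h(q(W))}=\overline{f(W)}$, which is compact by~(a) applied to $W$, and you are done without ever invoking the covering property from~(b). So you may delete the second neighborhood $W$ and the entire covering argument, and work with $U$ alone.
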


\begin{proof}
Let $U$ be a balanced convex neighborhood of zero in $L$ such that the image of $U$ in $B$ has
a compact closure. Let $p$ be the gauge functional corresponding to $U$. We can take for $B_1$
the completion $\widetilde L_U$ of the normed space $L_U$ associated with the seminorm $p$, as in 
\cite[Ch. 3, \S 7]{Sch}.
\end{proof}

\begin{theorem}
\label{t:cnuc}
Every Schwartz LCS is multi-reflexive.
\end{theorem}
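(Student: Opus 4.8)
The plan is to combine Lemma~\ref{l:fac} with the classical fact that a compact operator between Banach spaces factors through a reflexive Banach space. Given a Schwartz LCS $E$, I first want to exhibit $E$ as a subspace of a product of reflexive Banach spaces, and since the multi-reflexive spaces form a variety (closed under subspaces and products), it suffices to produce, for each continuous seminorm $q$ on $E$, a continuous linear map $E\to R_q$ into a reflexive Banach space $R_q$ whose composition with the norm of $R_q$ dominates $q$; then the diagonal map $E\to\prod_q R_q$ is an isomorphic embedding. So fix $q$, let $U$ be the associated balanced convex neighborhood of zero, and let $\pi_q:E\to B_q$ be the canonical continuous linear map into the Banach space $B_q=\widetilde E_U$ completing the normed quotient $E_U$, so that $q(x)=\norm{\pi_q(x)}$ for all $x\in E$.

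Next I would invoke Definition~\ref{d:Sch}(a): the map $\pi_q:E\to B_q$ is a continuous linear map from the Schwartz space $E$ to a Banach space, hence it is \emph{compact}. Actually it is cleaner to apply Lemma~\ref{l:fac} directly: there is a Banach space $B_1$, a continuous linear map $g:E\to B_1$, and a \emph{compact} operator $T:B_1\to B_q$ with $\pi_q=T\circ g$. Now I bring in the Davis--Figiel--Johnson--Pe\l czy\'nski factorization theorem (as stated e.g. in \cite[Theorem 13.22]{Fabian}, already cited in the excerpt, or its weakly compact version): a weakly compact operator between Banach spaces factors through a reflexive Banach space. A compact operator is a fortiori weakly compact, so $T$ factors as $B_1\xrightarrow{\;S\;} R_q\xrightarrow{\;P\;} B_q$ with $R_q$ reflexive and $S,P$ bounded linear. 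Composing, $\pi_q=P\circ S\circ g$, so $\psi_q:=S\circ g:E\to R_q$ is a continuous linear map and $q(x)=\norm{\pi_q(x)}_{B_q}=\norm{P(\psi_q(x))}_{B_q}\le\norm{P}\,\norm{\psi_q(x)}_{R_q}$ for all $x\in E$.

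Finally I assemble the pieces: the family $\{\psi_q\}$, as $q$ ranges over a base of continuous seminorms of $E$, gives a continuous linear map $\Psi:E\to\prod_q R_q$, $x\mapsto(\psi_q(x))_q$. It is injective and an embedding because for each $q$ the seminorm $q$ is recovered up to the constant $\norm{P}$ from the $R_q$-coordinate, so the topology of $E$ (generated by the seminorms $q$) is exactly the subspace topology inherited from the product. Hence $E$ is isomorphic to a subspace of a product of reflexive Banach spaces, i.e. $E$ is multi-reflexive.

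The only genuine obstacle is the factorization of a compact (or weakly compact) operator through a reflexive space; this is exactly the DFJP construction, and here it is available off the shelf from \cite{Fabian}, so the proof is short. A minor technical point to get right is that $q$ is only a seminorm, not a norm, so $E_U$ is a quotient before completion; this is harmless and is precisely the construction in \cite[Ch.~3, \S7]{Sch} invoked in Lemma~\ref{l:fac}. One should also note that, since we only use weak compactness of $T$, the argument simultaneously reproves the cited references \cite[17.2.9 and 21.1.1(c)]{Jarch}.
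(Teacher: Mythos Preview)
Your proof is correct and follows essentially the same route as the paper's: apply Lemma~\ref{l:fac} to factor each canonical map $E\to B_q$ through a compact operator, then invoke the Davis--Figiel--Johnson--Pe\l czy\'nski theorem to factor that compact (hence weakly compact) operator through a reflexive space. The paper phrases the conclusion as ``maps to reflexive Banach spaces generate the topology'' rather than writing out the diagonal embedding explicitly, and it cites \cite[Theorem~13.33]{Fabian} (not 13.22) for DFJP, but the argument is the same.
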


\begin{proof}
Let $L$ be a Schwartz LCS. We want to prove that continuous linear maps from $L$ to reflexive
Banach spaces generate the topology of $L$. Since maps of $L$ to all Banach spaces generate the
topology of $L$, 
it suffices to prove that every continuous linear map $L\to B$
to a Banach space admits a factorization $L\to B_1\to B$, where $B_1$ is a reflexive Banach space.
According to Lemma~\ref{l:fac}, we can find a factorization $L\to B_2\to B$, 
where $B_2\to B$ is a compact operator between Banach spaces.
 Compact operators are weakly compact, and in virtue of the 
Davis--Figiel--Johnson--Pe\l{}czy\'nski theorem \cite[Theorem 13.33]{Fabian} weakly compact operators
between Banach spaces admit a factorization through reflexive spaces. Thus we have a required factorization
$L\to B_2\to B_1\to B$ for some reflexive Banach space $B_1$.
\end{proof}

Below we collect several basic facts about the topology of $L(X)$ and associated Banach spaces.
For a space $X$ we denote by $L_0(X)$ the hyperplane
$$
L_0(X)=\left\{\sum c_i x_i: \sum c_i = 0, \ c_i\in \R, \ x_i\in X\right\}
$$ 
of $L(X)$.
The topology of $L_0(X)$ is generated by the seminorms $\overline d$ of the following form.
For a continuous pseudometric $d$ on $X$, the seminorm $\overline d$ on $L_0(X)$ is defined by 

\begin{equation}
\label{eq:L1}
\overline d(v)=\inf\left\{\sum |c_i| d(x_i,y_i): v=\sum c_i(x_i-y_i), \ c_i\in \mathbb R, \ x_i, y_i\in X\right\}.
\end{equation}

Denote by $B_d$ the Banach space associated with the seminormed space $(L_0(X), \overline d)$, that is, 
the completion of the quotient by the kernel of $\overline d$. The dual Banach space $B_d^*$ is naturally
isomorphic to the quotient $C_d/\mathbb R$ of the space of $d$-Lipschitz functions on $X$ 
by the one-dimensional subspace of constant functions. The seminorm on $C_d$ assigns to each
$d$-Lipschitz function $f$ its Lipschitz constant 

\begin{equation}
\label{eq:L2}
\norm{f}= \inf\left\{k\ge 0: |f(x)-f(y)|\le k\cdot d(x,y)\,\mbox{for all}\, x, y\in X\right\}.
\end{equation}

\begin{lemma}
\label{l:sqrt}
If $X$ is a compact space and $d$ is a continuous pseudometric on $X$, then the natural operator
$B_{\sqrt{d}}\to B_d$ is compact.
\end{lemma}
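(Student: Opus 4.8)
The plan is to show that the inclusion-type operator $B_{\sqrt d}\to B_d$ is compact by exhibiting its unit ball's image as (essentially) contained in a precompact set, using the compactness of $X$ together with a uniform continuity / modulus-of-continuity argument on the Lipschitz function side. Since $B_{\sqrt d}\to B_d$ is compact if and only if its adjoint $B_d^*\to B_{\sqrt d}^*$ is compact, and the adjoint is, by the identification noted before the lemma, the natural map $C_d/\R\to C_{\sqrt d}/\R$ sending a $d$-Lipschitz function to the same function regarded as a $\sqrt d$-Lipschitz function (note every $d$-Lipschitz function is automatically $\sqrt d$-Lipschitz on a bounded pseudometric space, and $\diam_d X<\infty$ by compactness), it suffices to prove that this map is compact, i.e. that the image of the unit ball of $C_d/\R$ is precompact in $C_{\sqrt d}/\R$.

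First I would fix a representative for each class in $C_d/\R$, say by normalizing $f(a)=0$ for a fixed basepoint $a\in X$; then the unit ball of $C_d/\R$ is represented by the set $\sF$ of functions $f$ with $|f(x)-f(y)|\le d(x,y)$ for all $x,y$ and $f(a)=0$. Because $\diam_d X =:D<\infty$, every $f\in\sF$ satisfies $|f|\le D$, so $\sF$ is bounded in the sup norm. I want to show $\sF$ is precompact in the seminorm $\norm{\cdot}_{\sqrt d}$ of $C_{\sqrt d}$, i.e. in the quotient $C_{\sqrt d}/\R$. By Arzelà--Ascoli, $\sF$ is relatively compact in $C(X)$ with the sup norm: it is pointwise bounded and equicontinuous with respect to the topology of $X$ (equicontinuity because $|f(x)-f(y)|\le d(x,y)$ and $d$ is a continuous pseudometric on the compact space $X$). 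So it is enough to check that the identity map from $(C(X),\norm{\cdot}_\infty)$ restricted to the bounded set $\sF$, into $C_{\sqrt d}/\R$, is continuous; equivalently, that sup-norm convergence of a sequence in $\sF$ implies convergence in the Lipschitz-$\sqrt d$ seminorm.

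The main obstacle — and the real content — is exactly this last implication: controlling the $\sqrt d$-Lipschitz seminorm of $f-g$ (for $f,g\in\sF$) by $\norm{f-g}_\infty$. Here is the estimate I would use. Write $h=f-g$; then $h$ is $2$-Lipschitz for $d$ and $\norm h_\infty\le\e$ when $f,g$ are sup-close. For any $x\ne y$, if $d(x,y)\ge\delta$ then
$$
\frac{|h(x)-h(y)|}{\sqrt{d(x,y)}}\le\frac{2\e}{\sqrt\delta},
$$
while if $d(x,y)<\delta$ then
$$
\frac{|h(x)-h(y)|}{\sqrt{d(x,y)}}\le\frac{2\,d(x,y)}{\sqrt{d(x,y)}}=2\sqrt{d(x,y)}<2\sqrt\delta.
$$
Choosing $\delta=\e$ gives $\norm h_{\sqrt d}\le 2\sqrt\e + 2\sqrt\e=4\sqrt\e$, which tends to $0$ as $\e\to 0$. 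Thus the map $\sF\to C_{\sqrt d}/\R$ is uniformly continuous on the sup-norm-precompact set $\sF$, so its image is precompact; hence $C_d/\R\to C_{\sqrt d}/\R$ is a compact operator, and therefore its preadjoint $B_{\sqrt d}\to B_d$ is compact as well. I would remark that compactness of $X$ enters twice: to guarantee $\diam_d X<\infty$ (so that $d$-Lipschitz implies $\sqrt d$-Lipschitz and the functions are uniformly bounded) and to invoke Arzelà--Ascoli for the equicontinuous family $\sF$.
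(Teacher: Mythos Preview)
Your proof is correct and follows essentially the same route as the paper's: pass to the dual operator $C_d/\R\to C_{\sqrt d}/\R$ via Schauder's theorem, normalize at a basepoint, use Arzel\`a--Ascoli to get sup-norm precompactness of the unit ball, and then upgrade sup-norm smallness to $\sqrt d$-Lipschitz smallness by the near/far dichotomy on pairs $x,y$. The only cosmetic differences are that the paper phrases the last step as convergence of a sequence rather than uniform continuity on $\sF$, and your bound $4\sqrt\e$ could be sharpened to $2\sqrt\e$ since the two cases are alternatives (a $\max$, not a sum).
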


\begin{proof}
We may assume that $d$ is a metric. Pick a point $x_0\in X$, and identify the quotient
$C_d/\mathbb R$ with the Banach space $\breve{C}_d=\{f\in C_d: f(x_0)=0\}$. A bounded linear operator between Banach spaces
is compact if and only if its dual is compact, by the Shauder theorem \cite[Theorem 15.3]{Fabian}, so it suffices to prove
that the embedding $\breve{C}_d\to \breve{C}_{\sqrt{d}}$ is compact. Take any sequence $(f_n)$
in the unit ball of $\breve{C}_d$, that is, a sequence of $d$-non-expanding functions such that $f_n(x_0)=0$.
By the Arzel\`{a}--Ascoli theorem, the sequence $(f_n)$ has a uniformly convergent susbsequence. To simplify
the notation, assume that the sequence $(f_n)$ itself uniformly converges to a limit $f$.
We claim that $(f_n)$ converges to $f$ in $\breve{C}_{\sqrt{d}}$ as well. Let $\e > 0$ be given. 
Put $g_n=f_n-f$. We must prove that for $n$ large enough we have
\begin{equation}
\label{eq1}
|g_n(x)-g_n(y)|\le \e \sqrt{d(x,y)}
\end{equation}
for all $x,y\in X$. Note that each $g_n$ is 2-Lipschitz with respect to $d$. If $x$ and $y$
are close to each other, namely, if $d(x,y)\le \e^2/4$, then (\ref{eq1}) holds:
$$
|g_n(x)-g_n(y)|\le 2d(x,y)=2\sqrt{d(x,y)}\sqrt{d(x,y)}\le\e\sqrt{d(x,y)}.
$$
On the other hand, for pairs $x,y$ such that $d(x,y)>\e^2/4$ the inequality~\ref{eq1}
holds for $n$ large enough because the sequence $(g_n)$ uniformly converges to zero. 
\end{proof}

\begin{theorem}
\label{t:lx}
If $X$ is a compact space, then $L(X)$ is a Schwartz LCS. 
\end{theorem}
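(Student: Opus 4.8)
The plan is to establish the Schwartz property via condition (a) of Definition~\ref{d:Sch}: every continuous linear map from $L(X)$ to a Banach space is compact. The key structural observations are already in place. The topology of $L(X)$ is (up to the harmless splitting off of a one-dimensional summand) the topology of $L_0(X)$, which is generated by the seminorms $\overline d$ indexed by continuous pseudometrics $d$ on $X$; and for each such $d$ the associated Banach space $B_d$ is built from the $d$-Lipschitz functions. A continuous linear map $u\colon L(X)\to B$ to a Banach space factors through some $B_d$, because its continuity means $u$ is bounded with respect to some $\overline d$. So it suffices to produce, for every continuous pseudometric $d$, another continuous pseudometric $d'$ such that the natural operator $B_{d'}\to B_d$ is compact: then $u$ factors as $L(X)\to B_{d'}\to B_d\to B$ with the middle arrow compact, hence $u$ is compact.

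First I would recall that on a compact space $X$, given a continuous pseudometric $d$, the function $\sqrt d$ is again a continuous pseudometric (the triangle inequality for $\sqrt d$ follows from $\sqrt{a+b}\le\sqrt a+\sqrt b$, and continuity is clear since $t\mapsto\sqrt t$ is continuous). Then I would invoke Lemma~\ref{l:sqrt} directly: the natural operator $B_{\sqrt d}\to B_d$ is compact. Taking $d'=\sqrt d$ gives exactly the factorization needed. Thus for any continuous linear map $u\colon L(X)\to B$, pick $d$ with $u$ bounded relative to $\overline d$, factor $u=v\circ\pi_d$ where $\pi_d\colon L(X)\to B_d$ and $v\colon B_d\to B$ is bounded; then $\pi_d=\iota\circ\pi_{\sqrt d}$ where $\iota\colon B_{\sqrt d}\to B_d$ is the compact operator of Lemma~\ref{l:sqrt}, so $u=v\circ\iota\circ\pi_{\sqrt d}$ is compact. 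This shows $L(X)$ is a Schwartz LCS.

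One point requiring a small amount of care is the reduction from $L(X)$ to $L_0(X)$: the space $L(X)$ decomposes as $L_0(X)\oplus\R$ (via the "sum of coefficients" functional, which is continuous), and $\R$ is trivially Schwartz, so $L(X)$ is Schwartz iff $L_0(X)$ is; alternatively one checks directly that the seminorms generating the topology of $L(X)$ are dominated by seminorms of the form $\overline d$ plus the absolute value of the coefficient-sum functional, and the latter maps into the one-dimensional (hence Schwartz) Banach space $\R$. Either way the argument localizes to the $\overline d$ seminorms. I expect no serious obstacle here, since the genuinely analytic content — the Arzel\`a--Ascoli compactness argument comparing $d$-Lipschitz and $\sqrt d$-Lipschitz norms — has been isolated into Lemma~\ref{l:sqrt}; what remains is the bookkeeping of the factorization, which is routine.
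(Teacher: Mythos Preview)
Your proposal is correct and follows essentially the same route as the paper: factor a continuous linear map $L(X)\to B$ through some $B_d$ (the paper chooses $d$ to be the pseudometric on $X$ pulled back from $B$ via $T$), then insert the compact arrow $B_{\sqrt d}\to B_d$ from Lemma~\ref{l:sqrt}. Your explicit handling of the $L(X)=L_0(X)\oplus\R$ reduction is a bit more careful than the paper's terse version, but the substance is identical.
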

 
\begin{proof}
Let $T:L(X)\to B$ be a continuous linear operator, where $B$ is a Banach space. We want to prove that $T$ is compact.
Let $d$ be the pseudometric on $X$ induced by $T$ from the metric on $B$. We have a factorization
$L(X)\to B_d\to B$ of the operator $T$. By Lemma~\ref{l:sqrt}, the middle arrow in the factorization 
$L(X)\to B_{\sqrt{d}}\to B_d\to B$ is a compact operator. It follows that $T$ is a compact operator as well. 
\end{proof}

Combining Theorems \ref{t:cnuc} and \ref{t:lx}, we get the result we were aiming at.

\begin{theorem}
\label{t:main2}
If $X$ is a compact space, then $L(X)$ is multi-reflexive. 
\end{theorem}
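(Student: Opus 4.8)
The plan is to obtain this as an immediate consequence of the two theorems just proved in this section. Theorem~\ref{t:lx} asserts that $L(X)$ is a Schwartz LCS whenever $X$ is compact, while Theorem~\ref{t:cnuc} asserts that every Schwartz LCS is multi-reflexive. Chaining the two gives the statement with no further work.

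Spelled out, the first step is to invoke Theorem~\ref{t:lx}: compactness of $X$ forces every continuous linear operator $L(X)\to B$ into a Banach space to be compact (this is where the real content sits, via Lemma~\ref{l:sqrt} and the $\sqrt d$/Arzel\`a--Ascoli argument), so $L(X)$ satisfies Definition~\ref{d:Sch}(a). The second step is to apply Theorem~\ref{t:cnuc}: any continuous linear map from a Schwartz LCS to a Banach space factors through a compact, hence weakly compact, operator between Banach spaces (Lemma~\ref{l:fac}), and then through a reflexive Banach space by the Davis--Figiel--Johnson--Pe\l czy\'nski theorem. Since continuous linear maps to Banach spaces generate the topology of $L(X)$, and each of them factors through a reflexive Banach space, $L(X)$ embeds isomorphically into a product of reflexive Banach spaces, i.e.\ it is multi-reflexive.

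I expect no genuine obstacle here: both ingredients are already in hand, so the proof is literally the composition ``Schwartz $\Rightarrow$ multi-reflexive'' applied to ``$X$ compact $\Rightarrow$ $L(X)$ Schwartz''. If one preferred a more self-contained route one could argue directly, using that for compact $X$ and a continuous pseudometric $d$ the operator $B_{\sqrt d}\to B_d$ is compact, iterating the square-root step, and then factoring the resulting weakly compact operator through a reflexive space by the Davis--Figiel--Johnson--Pe\l czy\'nski theorem; but this merely re-derives Theorems~\ref{t:lx} and~\ref{t:cnuc}, so the two-line combination is the natural write-up.
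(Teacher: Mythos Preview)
Your proposal is correct and matches the paper's own proof, which is literally the one-line observation that Theorem~\ref{t:main2} follows by combining Theorem~\ref{t:lx} (compact $X$ implies $L(X)$ is Schwartz) with Theorem~\ref{t:cnuc} (every Schwartz LCS is multi-reflexive). Your additional remarks about the underlying lemmas and the possible self-contained route are accurate but not needed for the write-up.
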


Theorems~\ref{t:lx} and~\ref{t:main2} can be readily generalized to the case of 
$k_\omega$-spaces, similarly to~\cite{Chasco}. Indeed, let $X=\bigcup_{n\in\N} K_n$ be a $k_\omega$-space, 
where $(K_n)$ is a sequence of compact subsets of $X$ witnessing the $k_\omega$-property.
Then $X$ is a quotient of the free sum $\bigoplus_{n\in\N} K_n$ \cite{k_omega}, and $L(X)$ is a quotient of 
$L(\bigoplus_{n\in\N} K_n)$ which is isomorphic to the countable locally convex sum of $L(K_n)$.
Since Schwartz spaces are preserved by quotients and by locally convex countable sums \cite[Proposition 21.1.7]{Jarch},
 we conclude that $L(X)$ is a Schwartz LCS.

In particular, we have established the following (which is a somewhat special case of~\cite{Chasco}):
\begin{theorem}
\label{t:loccomp}
If $X$ is a locally compact $\sigma$-compact space, then $L(X)$ is a Schwartz LCS and hence
(Theorem~\ref{t:cnuc}) multi-reflexive. 
\end{theorem}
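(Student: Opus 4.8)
The plan is to reduce the locally compact $\sigma$-compact case to the $k_\omega$-case already treated in the paragraph preceding Theorem~\ref{t:loccomp}, and then apply Theorem~\ref{t:cnuc}. So the first step is purely topological: I would show that every locally compact $\sigma$-compact space $X$ is a $k_\omega$-space. Write $X=\bigcup_{n\in\N} F_n$ with each $F_n$ compact. Using local compactness, I would enlarge these sets to produce a sequence $(K_n)$ of compact subsets with $K_n\subset\operatorname{int}(K_{n+1})$ and $X=\bigcup_n K_n$: given $K_n$ and $F_{n+1}$, cover the compact set $K_n\cup F_{n+1}$ by finitely many open sets with compact closure and let $K_{n+1}$ be the union of those closures. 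Then any set $C\subset X$ meeting every $K_n$ in a closed set is closed, because a point $x\notin C$ lies in $\operatorname{int}(K_m)$ for some $m$, and $\operatorname{int}(K_m)\setminus C$ is a neighbourhood of $x$ disjoint from $C$ (here one uses that $C\cap K_{m+1}$ is closed in $K_{m+1}$, hence closed in $X$, and its complement is open). This establishes the $k_\omega$-property with witnessing sequence $(K_n)$.

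The second step is to invoke the already-established machinery: by the paragraph following Theorem~\ref{t:main2}, for a $k_\omega$-space $X=\bigcup_n K_n$ the space $L(X)$ is a quotient of $L\bigl(\bigoplus_{n\in\N} K_n\bigr)\cong\bigoplus_{n\in\N} L(K_n)$; each $L(K_n)$ is a Schwartz LCS by Theorem~\ref{t:lx}; countable locally convex direct sums of Schwartz spaces are Schwartz and Schwartz spaces are preserved under Hausdorff quotients (\cite[Proposition 21.1.7]{Jarch}); hence $L(X)$ is a Schwartz LCS. Finally, Theorem~\ref{t:cnuc} gives that $L(X)$ is multi-reflexive, which completes the proof.

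I expect the only real obstacle to be the bookkeeping in the first step — verifying that a locally compact $\sigma$-compact space genuinely is $k_\omega$ with a nice exhausting sequence of compacta; but this is a standard fact (it is essentially the construction of an exhaustion by compact sets) and carries no surprises. Everything after that is a direct citation of results proved or quoted earlier in the section, so no new analytic input is needed. In fact, since the statement is explicitly flagged as ``a somewhat special case of~\cite{Chasco},'' an alternative and even shorter route would be to simply cite \cite{Chasco} for the Schwartz property and then append Theorem~\ref{t:cnuc}; I would include the $k_\omega$-reduction as the self-contained argument and mention \cite{Chasco} as the original source.

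\begin{proof}
First we observe that a locally compact $\sigma$-compact space $X$ is a $k_\omega$-space.
Write $X=\bigcup_{n\in\N} F_n$ with each $F_n$ compact. We construct compact sets $K_n$
with $K_n\sbs\operatorname{int} K_{n+1}$ and $X=\bigcup_n K_n$: set $K_1=\emptyset$, and given
$K_n$, cover the compact set $K_n\cup F_{n+1}$ by finitely many open sets with compact
closures (possible by local compactness) and let $K_{n+1}$ be the union of these closures.
Now suppose $C\sbs X$ meets each $K_n$ in a closed set; since each $K_n$ is compact,
$C\cap K_n$ is closed in $X$. If $x\notin C$, pick $m$ with $x\in\operatorname{int} K_{m+1}$;
then $\operatorname{int} K_{m+1}\stm(C\cap K_{m+1})$ is an open neighbourhood of $x$ disjoint
from $C$. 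Hence $C$ is closed, so $(K_n)$ witnesses that $X$ is a $k_\omega$-space.

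By the discussion preceding the statement (see also \cite{Chasco}), $X$ is a quotient of
$\bigoplus_{n\in\N}K_n$, so $L(X)$ is a Hausdorff quotient of $L(\bigoplus_{n\in\N}K_n)$,
which is isomorphic to the locally convex direct sum $\bigoplus_{n\in\N}L(K_n)$. Each
$L(K_n)$ is a Schwartz LCS by Theorem~\ref{t:lx}; countable locally convex direct sums of
Schwartz spaces are Schwartz, and Schwartz spaces are preserved by Hausdorff quotients
\cite[Proposition 21.1.7]{Jarch}. Therefore $L(X)$ is a Schwartz LCS, and by
Theorem~\ref{t:cnuc} it is multi-reflexive.
\end{proof}
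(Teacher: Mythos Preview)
Your proposal is correct and follows essentially the same route as the paper: the paper proves the Schwartz property for $k_\omega$-spaces via the quotient $\bigoplus_n K_n \to X$ and the permanence properties of Schwartz spaces, and then simply states Theorem~\ref{t:loccomp} as a special case (``In particular, we have established\dots''), leaving the implication ``locally compact $\sigma$-compact $\Rightarrow$ $k_\omega$'' implicit. Your write-up merely makes that topological step explicit with the standard compact-exhaustion argument, which is a welcome addition but not a different method.
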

%
%
%

Our next aim is to show that for a wide and natural class of Tychonoff spaces $X$ the converse of Theorem~\ref{t:loccomp} is true:
$L(X)$ is a Schwartz LCS if and only if $X$ is a locally compact and $\sigma$-compact space.

\begin{lemma}
\label{l:2}
If $L(X)$ is multi-reflexive, then for every Banach space $E$ and every continuous map $f:X\to E$
the image $f(X)$ can be covered by countably many weakly compact subspaces of $E$.
\end{lemma}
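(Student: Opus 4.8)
The plan is to exploit the fact that a multi-reflexive space embeds into a product of reflexive Banach spaces, reduce (just as in the proof of Lemma~\ref{l:factor_hilbert}) to a factorization through a single reflexive Banach space, and then use the standard description of weakly compact subsets of reflexive (indeed arbitrary) Banach spaces.

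\begin{proof}
Suppose $L(X)$ is isomorphically embedded into a product $\prod_{i\in I}R_i$ of reflexive Banach spaces $R_i$. Let $E$ be a Banach space and $f:X\to E$ a continuous map; extend $f$ to a continuous linear map $\widehat f:L(X)\to E$. Exactly as in the proof of Lemma~\ref{l:factor_hilbert}, continuity of $\widehat f$ together with the definition of the product topology yields a finite subset $A\subset I$ such that, writing $p:L(X)\to R_A:=\prod_{i\in A}R_i$ for the corresponding projection, one has $\ker p\subset\ker\widehat f$ and $\widehat f$ factors as $\widehat f=g\circ p$ for a bounded linear map $g$ defined on the subspace $p(L(X))$ of $R_A$; extending $g$ by continuity to the closure $\overline{p(L(X))}$ we obtain a bounded linear map $g:R_1\to E$ where $R_1=\overline{p(L(X))}$ is a closed subspace of the finite product $R_A$. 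Since a finite product of reflexive spaces is reflexive and a closed subspace of a reflexive space is reflexive, $R_1$ is a reflexive Banach space. Thus $\widehat f$ factors as $L(X)\xrightarrow{p}R_1\xrightarrow{g}E$ with $R_1$ reflexive.

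Now I estimate $f(X)$. The restriction $p|_X:X\to R_1$ is continuous. For each $n\in\N$ let $X_n=\{x\in X:\norm{p(x)}\le n\}$; this is a closed subset of $X$ and $X=\bigcup_{n\in\N}X_n$, so $f(X)=\bigcup_{n\in\N}f(X_n)=\bigcup_{n\in\N}g(p(X_n))$. The set $p(X_n)$ is contained in the closed ball of radius $n$ of the reflexive Banach space $R_1$, which by the reflexivity of $R_1$ (Banach--Alaoglu together with the fact that the weak and weak$^*$ topologies coincide on $R_1^{**}=R_1$) is weakly compact; see \cite[Theorem~3.29 and Chapter~4]{Fabian}. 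Its closed convex hull in $R_1$ is then also weakly compact by the Krein--\v{S}mulyan theorem \cite[Theorem~3.58]{Fabian}, so in any case $\overline{p(X_n)}$ is a weakly compact subset of $R_1$. The bounded linear operator $g$ is weak-to-weak continuous, hence $g\bigl(\overline{p(X_n)}\bigr)$ is a weakly compact subset of $E$, and it contains $g(p(X_n))=f(X_n)$. Therefore $f(X)=\bigcup_{n\in\N}f(X_n)$ is covered by the countably many weakly compact sets $W_n:=g\bigl(\overline{p(X_n)}\bigr)$, $n\in\N$, as required.
\end{proof}

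The routine parts are the factorization argument (a verbatim repetition of the one in Lemma~\ref{l:factor_hilbert}, now with ``Hilbert'' replaced by ``reflexive'' and using permanence of reflexivity under finite products and closed subspaces) and the standard Banach-space facts that closed balls in reflexive spaces are weakly compact and that bounded operators are weak-to-weak continuous. The only point that requires a moment's care is the exhaustion $X=\bigcup_n X_n$ by the preimages of balls under $p|_X$: this works for \emph{any} continuous map into a Banach space and is where the decomposition into countably many pieces originates, so that is the conceptual heart of the statement — the rest is bookkeeping.
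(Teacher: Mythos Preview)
Your proof is correct and follows essentially the same route as the paper: factor $\widehat f$ through a reflexive Banach space via the argument of Lemma~\ref{l:factor_hilbert}, then use that closed balls in a reflexive space are weakly compact and that bounded operators are weak-to-weak continuous. The only cosmetic difference is that the paper covers all of $p(F)$ (in your notation, $g(R_1)$) by the images of the balls directly, whereas you first pull back to the sets $X_n$; your detour through Krein--\v{S}mulyan is unnecessary, since $g$ applied to the weakly compact ball of radius $n$ already gives a weakly compact set containing $f(X_n)$.
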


\begin{proof}
If $L(X)$ is multi-reflexive, by literally the same arguments as we have used in the proof of
Lemma \ref{l:factor_hilbert},
we can represent $f$ as a composition $f=p\circ g$, where $g:X\to F$
is a continuous map to a reflexive Banach space $F$ and $p:F\to E$ is a bounded linear map. Since the reflexive Banach space $F$ is
the union of countably many weakly compact sets, the same is true for the set $p(F)$ which is a subset
of $E$ containing $f(X)$.
\end{proof}

\begin{fact}
\label{l:exampleD}
If $X$ is an uncountable discrete space, then $L(X)$ is not multi-reflexive.
\end{fact}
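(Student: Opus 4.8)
The plan is to derive a contradiction from the assumption that $L(X)$ is multi-reflexive when $X$ is uncountable and discrete, by exhibiting a continuous map $f\colon X\to E$ into some Banach space $E$ whose image cannot be covered by countably many weakly compact sets, thereby contradicting Lemma~\ref{l:2}. Since $X$ is discrete, \emph{every} map out of $X$ is continuous, so the task reduces to a purely Banach-space question: find a Banach space $E$ and an uncountable family $\{v_\alpha : \alpha<\kappa\}\subset E$ (indexed by the points of $X$) that is not contained in any countable union of weakly compact sets.

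The natural candidate is $E=c_0(X)$ (or $c_0$ with an uncountable index set) together with the map $f$ sending the point $\alpha\in X$ to the unit vector $e_\alpha$. First I would recall that in $c_0(\Gamma)$ the weakly compact subsets are well understood: by a theorem of Grothendieck, a bounded subset $W$ of $c_0(\Gamma)$ is weakly compact if and only if it is ``flat at infinity'' uniformly, i.e. for every $\varepsilon>0$ there is a finite set $F\subset\Gamma$ with $|w(\gamma)|\le\varepsilon$ for all $w\in W$ and all $\gamma\notin F$. In particular any weakly compact $W\subset c_0(\Gamma)$ meets the set of unit vectors $\{e_\gamma\}$ in only finitely many points: if $\|e_\gamma\|=1$ and $e_\gamma\in W$ then $\gamma$ must lie in the finite set $F$ associated to $\varepsilon=1/2$. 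Hence a countable union of weakly compact sets can contain only countably many of the $e_\alpha$; since $X$ is uncountable, $f(X)=\{e_\alpha:\alpha\in X\}$ is not covered by countably many weakly compact subsets of $c_0(X)$. By Lemma~\ref{l:2}, $L(X)$ is not multi-reflexive.

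The only genuine point requiring care is the characterization of weakly compact subsets of $c_0(\Gamma)$, or rather the single direction we need: a weakly compact subset contains only finitely many unit vectors $e_\gamma$. This can be proved directly without invoking the full Grothendieck criterion: if infinitely many distinct $e_{\gamma_n}$ ($n\in\N$) lay in a weakly compact set $W$, then $(e_{\gamma_n})$ would have a weakly convergent subsequence in $c_0(\Gamma)$; but any subsequence of $(e_{\gamma_n})$ converges weakly to $0$ (pairing with a finitely supported, hence with an arbitrary, functional in $\ell^1(\Gamma)$ tends to $0$), while the subsequence is not norm-null and — being equivalent to the $c_0$ unit basis — has no norm-convergent subsequence; more to the point, one checks that $(e_{\gamma_n})$ is not relatively weakly compact because its weak closure, computed in the bidual $\ell^\infty(\Gamma)$, contains the characteristic function of the set $\{\gamma_n\}$, which is not in $c_0(\Gamma)$ unless it is finite. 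Spelling out this last step (e.g. via the $\sigma(c_0^{**},c_0^*)$ topology, or by testing against the functional in $\ell^1$ that sums the coordinates along a subsequence) is the main — though still routine — obstacle; everything else is immediate from discreteness of $X$ and Lemma~\ref{l:2}.

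Alternatively, and perhaps more cleanly, I would phrase the construction using $E=\ell^\infty$-type embeddings only through $c_0$, and note that the argument is exactly parallel to the proof of Fact~\ref{l:sequence}: there one exploits that convergent sequences land in ellipsoids; here one exploits that only countably many unit vectors land in a countable union of weakly compact sets. This symmetry suggests presenting the proof in two sentences once the weak-compactness fact about $c_0(\Gamma)$ is in hand.
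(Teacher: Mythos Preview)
Your overall strategy---invoke Lemma~\ref{l:2} by exhibiting a map $f:X\to E$ whose image cannot be covered by countably many weakly compact sets---is exactly the paper's strategy. The problem is your choice of target space: the central claim that a weakly compact subset of $c_0(\Gamma)$ contains only finitely many unit vectors is \emph{false}. In fact the entire set $\{e_\gamma:\gamma\in\Gamma\}\cup\{0\}$ is weakly compact in $c_0(\Gamma)$, even for uncountable $\Gamma$. To see this, compute the weak-$*$ closure of $\{e_\gamma\}$ in the bidual $\ell^\infty(\Gamma)$: if a net $(e_{\gamma_\alpha})$ converges weak-$*$ to $x$, then testing against each $\delta_\gamma\in\ell^1(\Gamma)$ shows $x(\gamma)\in\{0,1\}$, and $x(\gamma)=1$ can hold for at most one $\gamma$; hence $x\in\{e_\gamma\}\cup\{0\}\subset c_0(\Gamma)$. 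Thus your image $f(X)$ is covered by a \emph{single} weakly compact set, and Lemma~\ref{l:2} yields no contradiction. The ``uniformly flat at infinity'' condition you quote characterises relative \emph{norm}-compactness in $c_0(\Gamma)$, not weak compactness; and your alternative bidual argument is also wrong, since the weak-$*$ closure of $\{e_{\gamma_n}\}$ picks up only $0$, not the characteristic function of $\{\gamma_n:n\in\N\}$.

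The paper repairs this by taking $E=l^1(\Gamma)$ instead of $c_0(\Gamma)$. There the Schur property forces every weakly compact set to be norm-compact; since $\|e_\gamma-e_{\gamma'}\|_1=2$ for $\gamma\neq\gamma'$, any such set meets $\{e_\gamma\}$ in a finite set, and now the counting argument goes through. Replace $c_0$ by $l^1$ and your proof becomes the paper's.
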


\begin{proof}
Consider the Banach space $E=l^1(\Gamma)$, 
where $\Gamma$ is a set of indexes with the same cardinality as $X$.
Let $Y=\left\{e_{\gamma}: \gamma\in\Gamma\right\}$ be the canonical basis of $E$.
Let $f$ be any bijection from $X$ onto $Y$, then $f:X\to E$ is a continuous map, because $Y$ is discrete in $E$.
We claim that $Y=f(X)$ cannot be covered by countably many weakly compact subspaces of $E$.
This follows from the easily verified fact that $Y$ is a closed discrete subset of $E$ endowed with the weak topology (use the natural pairing $\langle(a_\g), (b_\g)\rangle=\sum a_\g b_\g$ between
$E=l^1(\Gamma)$ and $E'=l^\infty(\Gamma)$, and consider 0--1 sequences in $E'$).
Alternatively, note that $l^1(\Gamma)$ has the Schur property, that is,
 every weakly converging sequence in $l^1(\Gamma)$ converges in the norm topology, and then
every weakly compact subset of $l^1(\Gamma)$ is compact in the norm topology 
(see \cite[Exercise 5.47]{Fabian}).
Hence, no infinite subset of the closed discrete set $Y$ 
can be covered by a weakly compact subset of $E$, and the claim follows.
\end{proof}

The following known fact plays a crucial role in the sequel.

\begin{theorem}[\cite{Usp1}]
\label{t:Usp1}
Let $X$ be a paracompact (in particular, metrizable) space.
Then for every closed subspace $Y \subset X$, the natural map $L(Y)\to L(X)$ is a topological embedding.
\end{theorem}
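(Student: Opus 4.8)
\textbf{Proof proposal for Theorem~\ref{t:Usp1}.}

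The plan is to reduce the statement to the Graev-style construction of the topology on $L(X)$ via continuous pseudometrics, and then to an extension problem for pseudometrics. Recall from~\eqref{eq:L1} that the topology of $L_0(X)$ (and one handles $L(X)$ similarly, taking into account the splitting $L(X)\cong L_0(X)\oplus\R$ coming from the coefficient functional) is generated by the seminorms $\overline d$ indexed by the continuous pseudometrics $d$ on $X$. The natural map $j:L(Y)\to L(X)$ is always continuous and injective, so the only point is to show it is \emph{open onto its image}: every basic neighborhood of zero in $L(Y)$, i.e. a ball of some $\overline e$ for a continuous pseudometric $e$ on $Y$, contains the trace on $L(Y)$ of a neighborhood of zero in $L(X)$, i.e. a ball of some $\overline d$ with $d$ a continuous pseudometric on $X$. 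Concretely, it suffices to find, for each continuous pseudometric $e$ on $Y$, a continuous pseudometric $d$ on $X$ such that $d\restriction (Y\times Y)\ge e$ (after which a standard argument shows $\overline d\restriction L_0(Y)\ge \overline e$, possibly up to a harmless constant, because any representation $v=\sum c_i(x_i-y_i)$ with $x_i,y_i\in X$ can be replaced, using that $Y$ is closed and a retraction-type trick on the pseudometric side, by one contributing at least as much $e$-length).

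The key step is therefore: \emph{for a closed subspace $Y$ of a paracompact space $X$, every continuous pseudometric $e$ on $Y$ extends to a continuous pseudometric $d$ on $X$ with $d\restriction(Y\times Y)=e$.} For metrizable $X$ this is classical (Hausdorff's metric extension theorem / Arens). For paracompact $X$ one argues as follows. A continuous pseudometric $e$ on $Y$ gives a continuous map $q:Y\to M$ into the metric space $M=(Y/{\sim_e})$, the metric identification; since $Y$ is closed in the paracompact space $X$ and $M$ is metrizable (hence an absolute extensor for paracompact spaces — Dugundji's extension theorem, or the fact that $M$ embeds isometrically into the Banach space $C_b(M)$ via the Kuratowski embedding used already in the proof of Theorem~\ref{t:2}, and Banach spaces are absolute extensors for paracompact spaces), the map $q$ extends to a continuous map $Q:X\to C_b(M)$. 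Pulling back the norm-pseudometric of $C_b(M)$ along $Q$ yields a continuous pseudometric $d$ on $X$ which restricts to $e$ on $Y$ (because $\Phi\circ q$ is an isometric embedding on $M$, where $\Phi$ is the Kuratowski embedding). This is the place where paracompactness is genuinely used, and it is the main obstacle: one must be careful that the extension of the \emph{map} can be arranged to give back exactly $e$ on $Y\times Y$ rather than merely something dominating it — and if only domination $d\restriction(Y\times Y)\ge e$ is obtained, one still has to check this suffices for openness of $j$, which it does by the seminorm comparison mentioned above.

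Having the pseudometric extension, I would finish as follows. Fix a continuous pseudometric $e$ on $Y$ and choose a continuous pseudometric $d$ on $X$ with $d\restriction(Y\times Y)=e$. Then for $v\in L_0(Y)$ one has $\overline e(v)=\overline d(v)$ where the right-hand infimum is taken over representations with $x_i,y_i\in X$: the inequality $\overline d(v)\le \overline e(v)$ is immediate since $Y$-representations are among $X$-representations and $d$ agrees with $e$ on $Y$; for the reverse, given any $X$-representation $v=\sum c_i(x_i-y_i)$ one uses that $v$ is supported on $Y$ together with the triangle inequality for $d$ to produce a $Y$-representation of no greater $e$-length — this is the one routine computation I would actually carry out, and it is exactly the standard argument (cf.\ \cite{Usp1}) showing that the Graev seminorm on a subspace is intrinsic. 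It follows that the inclusion $L_0(Y)\hookrightarrow L_0(X)$ is a topological embedding; combining with the complemented splitting by the coefficient functional (which is continuous on both $L(X)$ and $L(Y)$ and restricts correctly), the inclusion $L(Y)\hookrightarrow L(X)$ is a topological embedding as well. Finally, closedness of the image, if desired, follows because $Y$ is closed in $X$ and one can separate any point of $L(X)\setminus L(Y)$ from $L(Y)$ by a suitable $\overline d$ built from a pseudometric on $X$ that collapses $Y$ — but for the statement as given only the embedding assertion is needed, so I would stop once the topological embedding is established.
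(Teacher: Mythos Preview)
The paper does not prove Theorem~\ref{t:Usp1}; it is quoted as a known fact with a reference to \cite{Usp1}, so there is no in-paper proof to compare against. Your outline is the standard route and is essentially the argument in \cite{Usp1}: reduce the embedding question to extending an arbitrary continuous pseudometric $e$ on the closed subspace $Y$ to a continuous pseudometric $d$ on the paracompact space $X$, and then check that the Graev seminorm $\overline d$ restricts to $\overline e$ on $L_0(Y)$. The pseudometric extension via a Kuratowski embedding of the metric quotient into a Banach space, followed by Dugundji-type extension of maps from closed subsets of paracompact spaces into convex targets, is exactly the mechanism used there.

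One remark on the step you flag as ``the one routine computation I would actually carry out''. The direct combinatorial approach you sketch (turn an $X$-representation of $v\in L_0(Y)$ into a $Y$-representation of no greater length via the triangle inequality) can be made to work but is fiddlier than it looks. The clean and essentially one-line alternative is the dual formulation already recorded in~\eqref{eq:L2}: one has
\[
\overline d(v)=\sup\bigl\{\langle v,f\rangle:\ f\ \text{is $1$-$d$-Lipschitz on }X\bigr\},
\qquad
\overline e(v)=\sup\bigl\{\langle v,g\rangle:\ g\ \text{is $1$-$e$-Lipschitz on }Y\bigr\}.
\]
Every $1$-$e$-Lipschitz real-valued function on $Y$ extends (McShane) to a $1$-$d$-Lipschitz function on $X$, and for $v\in L_0(Y)$ the pairing depends only on the restriction to $Y$; this yields $\overline e(v)\le\overline d(v)$ immediately, while the reverse inequality is trivial. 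With this substitution your argument is complete and matches the source.
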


\begin{corollary}
\label{l:metr-non-sep}
If $X$ is a non-separable metric space, then $L(X)$ is not multi-reflexive.
\end{corollary}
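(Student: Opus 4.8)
The plan is to reduce to the previously established fact about uncountable discrete spaces (Fact~\ref{l:exampleD}) by locating a nice closed subspace inside a non-separable metric space $X$. First I would recall that a metric space is separable if and only if it is second-countable if and only if it is hereditarily Lindel\"of, and in particular a non-separable metric space fails to be Lindel\"of. The standard combinatorial fact I want is that a non-separable metric space contains, for some $\varepsilon>0$, an uncountable $\varepsilon$-separated subset $D=\{x_\alpha:\alpha<\omega_1\}$, i.e.\ a set with $d(x_\alpha,x_\beta)\ge\varepsilon$ whenever $\alpha\neq\beta$. This is obtained by a routine transfinite/Zorn argument: fix $\varepsilon>0$ and take a maximal $\varepsilon$-separated set $D_\varepsilon$; if every $D_{1/n}$ were countable, then $\bigcup_n D_{1/n}$ would be a countable dense subset of $X$, contradicting non-separability. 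Such a $D$, with the subspace topology from $X$, is uncountable and discrete, and it is \emph{closed} in $X$, since any point of $X$ has an open $\varepsilon/2$-ball meeting $D$ in at most one point.

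Next I would invoke Theorem~\ref{t:Usp1}: since $X$ is metrizable, hence paracompact, and $D\subset X$ is closed, the natural map $L(D)\to L(X)$ is a topological embedding. Now $D$ is an uncountable discrete space, so by Fact~\ref{l:exampleD} the space $L(D)$ is not multi-reflexive. Finally, the class of multi-reflexive LCS is closed under the formation of (not necessarily closed) subspaces --- as noted in the excerpt, it forms a variety --- so if $L(X)$ were multi-reflexive, its subspace $L(D)$ would be multi-reflexive as well, a contradiction. Hence $L(X)$ is not multi-reflexive.

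The only mildly delicate point is the extraction of the uncountable $\varepsilon$-separated closed discrete subset; everything else is a direct citation of earlier results. I do not expect a genuine obstacle here: the separation argument is completely standard, and one could equally phrase it as ``a non-separable metric space is not hereditarily separable, so contains an uncountable discrete (indeed $\varepsilon$-uniformly discrete) subspace, which is automatically closed because uniformly discrete subsets of metric spaces are closed.'' If one wished to avoid choosing $\omega_1$ of the points, it suffices to take $D$ of cardinality $\kappa$ equal to the weight (equivalently density) of $X$, which is uncountable by hypothesis; the argument is unchanged.
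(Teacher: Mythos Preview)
Your proof is correct and follows exactly the same route as the paper's own proof, which simply notes that $X$ contains an uncountable closed discrete subspace $Y$ and then invokes Fact~\ref{l:exampleD} together with Theorem~\ref{t:Usp1}. The only difference is that you supply the standard details (the $\varepsilon$-separated set argument) for the existence of such a closed discrete subspace, which the paper leaves implicit.
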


\begin{proof}
$X$ contains an uncountable closed discrete subspace $Y$, 
so Fact~\ref{l:exampleD} and Theorem \ref{t:Usp1} apply.
\end{proof}

The {\em metric fan} $M$ is the metrizable space defined as follows. As a set, $M$ is the set
$\N\ti\N$ of pairs of natural numbers plus a point $p$ ``at infinity"; $p$ is the only non-isolated
point of $M$, and a basic neighborhood $U_n$ of $p$ consists of $p$ and all pairs $(a, b)$ such that 
$b > n$. Thus, $M$ is the union of countably many sequences converging to $p$. It is easy to verify that the space $M$ 
is not locally compact.

\begin{fact}
\label{l:fan}
Let $M$ be the metric fan defined above. Then the free LCS $L(M)$ is not multi-reflexive.
\end{fact}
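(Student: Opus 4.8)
The plan is to assume $L(M)$ is multi-reflexive and contradict this by producing a single continuous linear map out of $L(M)$ that refuses to factor through a reflexive Banach space in the way multi-reflexivity would force. The map I would use is $\pi_{d_0}\colon L(M)\to B_{d_0}$, the canonical continuous linear map into the Banach space associated with a conveniently chosen metric $d_0$ on $M$: the ``tree metric'' of the fan, $d_0((a,b),p)=2^{-b}$, $d_0((a,b),(a,b'))=|2^{-b}-2^{-b'}|$, and $d_0((a,b),(a',b'))=2^{-b}+2^{-b'}$ for $a\neq a'$. A routine check shows $d_0$ is a continuous metric inducing the topology of $M$.

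First, the reduction. Arguing exactly as in Lemma~\ref{l:factor_hilbert}, but with ``reflexive space'' in place of ``Hilbert space'' (this is the argument already used in Lemmas~\ref{l:factor_hilbert} and~\ref{l:2}), multi-reflexivity of $L(M)$ would give $\pi_{d_0}=\beta\circ\pi$ with $\pi\colon L(M)\to F$ continuous linear, $F$ reflexive, and $\beta\colon F\to B_{d_0}$ bounded. Since the topology of $L_0(M)$ is generated by the seminorms $\overline d$, continuity of $\pi$ yields a continuous pseudometric $d$ on $M$ with $\|\pi(v)\|_F\le\overline d(v)$ for $v\in L_0(M)$; replacing $d$ by $d+d_0$ we may assume $d\ge d_0$. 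Then $\pi|_{L_0(M)}$ factors through $\pi_d\colon L_0(M)\to B_d$, and comparing with $\pi_{d_0}=\rho\circ\pi_d$, where $\rho\colon B_d\to B_{d_0}$ is the canonical contraction induced by $d\ge d_0$, and using density of $\pi_d(L_0(M))$ in $B_d$, we get $\rho=\beta\circ\pi'$ for some bounded $\pi'\colon B_d\to F$. Hence $\rho$ is weakly compact: $\pi'$ sends the unit ball of $B_d$ into a bounded, hence relatively weakly compact, subset of the reflexive space $F$, and $\beta$ is weak-to-weak continuous. So it suffices to show: \emph{for every continuous pseudometric $d\ge d_0$ on $M$, the canonical map $\rho\colon B_d\to B_{d_0}$ is not weakly compact.}

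For the contradiction, continuity of $d$ at $p$ forces $\sup_a d((a,b),p)\to0$ as $b\to\infty$, so I would fix an index $n$ with $\delta_n:=\sup_a d((a,n),p)<\infty$ and consider $v_a:=d((a,n),p)^{-1}((a,n)-p)\in L_0(M)$ for $a\in\N$. Since $\overline d((a,n)-p)=d((a,n),p)$, each $\pi_d(v_a)$ lies in the unit ball of $B_d$, while $\rho\,\pi_d(v_a)=\pi_{d_0}(v_a)$ has $B_{d_0}$-norm $d_0((a,n),p)/d((a,n),p)=2^{-n}/d((a,n),p)\in[2^{-n}/\delta_n,1]$, bounded away from $0$ uniformly in $a$. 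The decisive point --- and the only place the fan geometry is genuinely used --- is that $\{\rho\,\pi_d(v_a):a\in\N\}$ cannot be relatively weakly compact in $B_{d_0}$, because the $n$-th row $\{(a,n):a\in\N\}$ is a countably infinite set that is $d_0$-equilateral (all mutual distances $2^{1-n}$) and sits at distance $2^{-n}$ from $p$, so its points are simultaneously separated by norm-one functionals on $B_{d_0}$. Concretely, for $A\subseteq\N$ let $h_A\colon M\to\R$ be $h_A((a,b))=\min(2^{-b},2^{-n})$ if $a\in A$ and $h_A(x)=0$ otherwise (so $h_A(p)=0$); a short case check shows $h_A$ is $1$-Lipschitz for $d_0$, hence defines an element of $C_{d_0}/\mathbb R=B_{d_0}^{*}$ of norm $\le1$, and $\langle h_A,\rho\,\pi_d(v_a)\rangle$ equals $2^{-n}/d((a,n),p)\ge 2^{-n}/\delta_n>0$ when $a\in A$ and $0$ when $a\notin A$. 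If $\rho$ were weakly compact, the Eberlein--\v{S}mulian theorem would give a subsequence $(\rho\,\pi_d(v_{a_k}))_k$ converging weakly in $B_{d_0}$; but taking $A=\{a_{2j}:j\in\N\}$ makes $\langle h_A,\rho\,\pi_d(v_{a_k})\rangle$ oscillate between $0$ (odd $k$) and numbers $\ge 2^{-n}/\delta_n$ (even $k$), which is impossible. So $\rho$ is not weakly compact and $L(M)$ is not multi-reflexive.

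I expect the real work to be confined to two routine-but-fiddly places: the pseudometric bookkeeping that converts the abstract factorization into the statement about $\rho\colon B_d\to B_{d_0}$, and the Lipschitz estimate for the functionals $h_A$. The latter is where the metric-fan hypothesis bites: each row is an \emph{infinite} uniformly $d_0$-separated set whose diameter is comparable to its distance from $p$. At a point of local compactness --- in particular in a convergent sequence, whose ``rows'' are singletons --- no such configuration exists, which is consistent with $L(X)$ being a Schwartz space, and hence multi-reflexive, in those situations.
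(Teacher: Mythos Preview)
Your argument is correct, but it takes a genuinely different route from the paper's. The paper works \emph{externally}: it embeds $M$ homeomorphically into $l^1$ as $\{e_n/k:n,k\in\N\}\cup\{0\}$, assumes a factorization $L(M)\to F\to l^1$ through a reflexive $F$, observes that the induced map $\breve A\to A$ between the copies of $M$ in $F$ and $l^1$ must be a homeomorphism, and concludes that some neighborhood of the apex in $\breve A$ --- hence a full row $\{e_n/k:n\in\N\}$ for $k$ large --- lands inside the weakly compact image $g(U)$ of the unit ball of $F$; after rescaling, the whole basis $\{e_n\}$ sits in a weakly compact set of $l^1$, contradicting the Schur property. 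Your proof is \emph{intrinsic}: it stays entirely within the $B_d$-machinery already set up in the paper, reduces multi-reflexivity to weak compactness of the canonical contraction $\rho\colon B_d\to B_{d_0}$, and then kills weak compactness by testing a normalized row $\{v_a\}$ against the explicit $d_0$-Lipschitz functionals $h_A$ via Eberlein--\v Smulian. The paper's approach is shorter and leans on a well-known fact about $l^1$; yours requires more bookkeeping (the $d\ge d_0$ reduction and the Lipschitz check for $h_A$) but is more self-contained and makes transparent exactly which geometric feature of the fan is responsible --- an infinite $d_0$-equilateral row whose diameter is comparable to its distance from the apex. One small wrinkle: $B_{d_0}$ is the Banach space associated with $L_0(M)$, not $L(M)$, so your map $\pi_{d_0}$ really lives on $L_0(M)$; since $L_0(M)$ is a closed hyperplane of $L(M)$, it inherits multi-reflexivity and the factorization lemma applies unchanged.
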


\begin{proof}
Consider the separable non-reflexive Banach space $E=l^1$, and let $(e_n)$ be the canonical basis.
The collection of all vectors of the form $e_n/k$, $k=1,2,\dots$, together with the zero vector,
is a subspace $A\sbs E$ homeomorphic to $M$. Let $f:M\to A$ be a homeomorphism, 
$\widehat f: L(M)\to E$ its linear extension. We want to prove that there is no factorization 
$L(M)\to F\to E$ of $\widehat f$, where $F$ is a reflexive Banach space. Suppose there is such a factorization.
 Let $\breve{A}$ be the image of $M$ in $F$, and let $p$ be the non-isolated point of $\breve{A}$.
Denote the arrow $F\to E$ by $g$. We have continuous bijections $M\to \breve{A}\to A$ such that their composition
is a homeomorphism. It follows that the restriction $g\restriction_{\breve{A}}: \breve{A}\to A$ is a homeomorphism. 
Let $U$ be the closed unit ball in $F$. 
Then $(p+U) \cap \breve{A}$ is a neighborhood of $p$ in $\breve{A}$. 
Since $g\restriction_{\breve{A}}: \breve{A}\to A$ is a homeomorphism, $g((p+U) \cap \breve{A})$ is a neighborhood of $g(p)=0$ in $A$, 
and so is the bigger set $g(U)\cap A$. Thus for $k$ large enough all the vectors $e_n/k$
 lie in the weakly compact set $g(U)$. Multiplying by $k$, we conclude that all the vectors $e_n$
 lie in a weakly compact subset of $E$. That is a contradiction: we noted in the proof of 
Fact~\ref{l:exampleD} that
the set $\{e_n\}$ is closed and discrete with respect to the weak topology of $E$.
\end{proof}


We now are ready to prove one of the main results of our paper.

\begin{theorem}
\label{t:main3}
Let $X$ be a first-countable paracompact space. The following conditions are equivalent: 
\begin{itemize}
  \item[(i)] $L(X)$ is a multi-reflexive LCS;
  \item[(ii)] $L(X)$ is a Schwartz LCS;
  \item[(iii)] $X$ is locally compact and $\sigma$-compact.
\end{itemize}
\end{theorem}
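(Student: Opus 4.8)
The implications $(iii)\Rightarrow(ii)$ and $(ii)\Rightarrow(i)$ are already in hand --- they are Theorems~\ref{t:loccomp} and~\ref{t:cnuc} respectively --- so the entire content of the theorem is the implication $(i)\Rightarrow(iii)$. The plan is to prove it by contradiction in two stages, in each stage exhibiting a \emph{closed} subspace $Y\subseteq X$ for which $L(Y)$ is already known not to be multi-reflexive; since $X$ is paracompact, Theorem~\ref{t:Usp1} gives a topological embedding $L(Y)\hookrightarrow L(X)$, and, as multi-reflexivity passes to subspaces, this contradicts~$(i)$.

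\emph{Stage 1 (local compactness).} Suppose $X$ is not locally compact and fix $x_{0}$ with no compact neighbourhood. Using first countability I would choose a decreasing open base $(V_{n})$ at $x_{0}$ with $\overline{V_{n+1}}\subseteq V_{n}$; then no $\overline{V_{n}}$ is compact and $\bigcap_{n}V_{n}=\{x_{0}\}$. The first step is to pass to a subsequence $(n_{k})$ for which the ``annuli'' $A_{k}:=\overline{V_{n_{k}+1}}\setminus V_{n_{k+1}}$ are all non-compact: if, for a given $n_{k}$, every set $\overline{V_{n_{k}+1}}\setminus V_{m}$ were compact, then $\overline{V_{n_{k}+1}}$ itself would be compact, since any open cover contains a member $W\ni x_{0}$, hence $W\supseteq V_{m}$ for some $m$, and what remains of $\overline{V_{n_{k}+1}}$ outside $W$ is contained in the compact set $\overline{V_{n_{k}+1}}\setminus V_{m}$; so a suitable $n_{k+1}>n_{k}+1$ exists. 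These annuli are automatically pairwise disjoint ($A_{k}\subseteq\overline{V_{n_{k}+1}}\subseteq V_{n_{k}}$ misses $V_{n_{k+1}}$, while $A_{k'}\subseteq V_{n_{k'}}\subseteq V_{n_{k+1}}$ for $k'>k$), and $A_{k}\subseteq\overline{V_{n_{k}+1}}$ with $n_{k}\to\infty$. Each $A_{k}$ is closed in $X$, hence paracompact, and non-compact; since a paracompact countably compact space is compact, $A_{k}$ is not countably compact, so I can choose in it a countably infinite set $D_{k}=\{d_{k,j}:j\in\N\}$ with no accumulation point in $X$, i.e.\ a closed discrete subset of $X$. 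I then claim that
$$A:=\{x_{0}\}\cup\bigcup_{k,j}\{d_{k,j}\}$$
is a closed copy of the metric fan $M$, under $d_{k,j}\mapsto(j,k)$ and $x_{0}\mapsto p$. The ``columns'' $C_{j}=\{d_{k,j}:k\in\N\}$ converge to $x_{0}$, because $d_{k,j}\in A_{k}\subseteq V_{n_{k}}$ and $n_{k}\to\infty$, and this happens uniformly in $j$ --- exactly the neighbourhood structure of $p$ in $M$. Since the family $\{A_{k}\}$ is locally finite on $X\setminus\{x_{0}\}$ (any point $\neq x_{0}$ eventually leaves the decreasing closed sets $\overline{V_{n_{k}+1}}$, whose intersection is $\{x_{0}\}$) and each $D_{k}$ is closed discrete in $X$, every $d_{k,j}$ is isolated in $A$ and $x_{0}$ is the only accumulation point of $A$; hence $A$ is closed in $X$ and homeomorphic to $M$. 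Now Fact~\ref{l:fan} says $L(A)=L(M)$ is not multi-reflexive, and Theorem~\ref{t:Usp1} supplies the contradiction. So $X$ is locally compact.

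\emph{Stage 2 ($\sigma$-compactness).} Assume $X$ is locally compact (Stage 1) but not $\sigma$-compact. A locally compact paracompact space is the topological direct sum $X=\bigoplus_{\alpha\in I}X_{\alpha}$ of open-and-closed locally compact Lindel\"of subspaces; each $X_{\alpha}$ is then $\sigma$-compact, so $X$ not being $\sigma$-compact forces $I$ to be uncountable. Picking a point $t_{\alpha}\in X_{\alpha}$ for every $\alpha$ gives an uncountable subspace $Y=\{t_{\alpha}:\alpha\in I\}$ that is closed and discrete in $X$ (the pieces $X_{\alpha}$ are open-and-closed and meet $Y$ in a single point). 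By Fact~\ref{l:exampleD} the free LCS of an uncountable discrete space is not multi-reflexive, while $L(Y)\hookrightarrow L(X)$ by Theorem~\ref{t:Usp1}; this again contradicts~$(i)$. Hence $X$ is $\sigma$-compact, which completes $(i)\Rightarrow(iii)$.

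The main obstacle is Stage 1 --- specifically, extracting a genuinely \emph{closed} copy of the metric fan from a non-locally-compact first-countable paracompact space. First countability supplies the countable base $(V_{n})$ at $x_{0}$; paracompactness is used twice: to guarantee, via ``paracompact $+$ countably compact $=$ compact'', that non-compact closed subspaces contain countably infinite closed discrete sets, and, through Theorem~\ref{t:Usp1}, to carry the obstruction from the closed subspace $A$ back to $L(X)$. The delicate bookkeeping is the choice of the subsequence $(n_{k})$ making the annuli $A_{k}$ simultaneously non-compact, pairwise disjoint, and shrinking to $x_{0}$; this is what forces $A$ to be closed with all the $d_{k,j}$ isolated in it, and it disposes of the awkward boundary sets $\overline{V_{n}}\setminus V_{n}$ along the way. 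Note that first countability enters only in Stage 1: Stage 2 is valid for every paracompact space.
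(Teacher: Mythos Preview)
Your proof is correct and follows exactly the paper's strategy: the paper also derives $(i)\Rightarrow(iii)$ by showing that a failure of local compactness produces a closed copy of the metric fan (contradicting Fact~\ref{l:fan} via Theorem~\ref{t:Usp1}), and then that a failure of $\sigma$-compactness produces an uncountable closed discrete subspace (contradicting Fact~\ref{l:exampleD}).

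The one notable difference is in Stage~1. The paper simply cites \cite[Lemma~8.3]{vD} for the statement that a first-countable paracompact space is locally compact if and only if it contains no closed copy of~$M$; you instead supply a direct construction of the closed fan. Your argument --- passing to a subsequence so that the annuli $A_k=\overline{V_{n_k+1}}\setminus V_{n_{k+1}}$ are non-compact and pairwise disjoint, then extracting countable closed discrete $D_k\subseteq A_k$ via ``paracompact $+$ countably compact $=$ compact'' --- is a clean, self-contained proof of precisely that lemma. The trade-off is the expected one: the paper's version is a two-line citation, while yours makes the theorem independent of~\cite{vD} at the cost of a paragraph of point-set bookkeeping. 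Stage~2 is identical to the paper's argument (the decomposition of a locally compact paracompact space into clopen $\sigma$-compact pieces is \cite[Theorem~5.1.27]{Eng} in both cases).
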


\begin{proof} (iii) $\implies$ (ii): this is Theorem~\ref{t:loccomp}.

(ii) $\implies$ (i): this is Theorem~\ref{t:cnuc}.

(i) $\implies$ (iii): 
by Theorem \ref{t:Usp1} and Fact~\ref{l:fan}, $X$ does not contain a closed copy of the metric fan~$M$.
A first-countable paracompact space is locally compact if and only if it does not contain
a closed subspace homeomorphic to $M$ (by \cite[Lemma 8.3]{vD}). Hence $X$ is locally compact.
Being locally compact and paracompact, $X$ can be represented as a disjoint union of clopen $\sigma$-compact subspaces
(by \cite[Theorem 5.1.27]{Eng}). Since $X$ does not contain an uncountable closed discrete subspace
(by Fact~\ref{l:exampleD} and Theorem \ref{t:Usp1}), the number of components in the representation of $X$ above is at most countable
and the proof is complete.
\end{proof}

\begin{corollary}
\label{c:main} Let $X$ be a metrizable space. The following conditions are equivalent: 
\begin{itemize}
  \item[(i)] $L(X)$ is a multi-reflexive LCS;
  \item[(ii)] $L(X)$ is a Schwartz LCS;
  \item[(iii)] $X$ is locally compact and has a countable base.
\end{itemize}
\end{corollary}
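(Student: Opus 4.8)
The plan is to deduce this directly from Theorem~\ref{t:main3}, the only extra ingredient being a routine point-set argument identifying the two topological conditions in the metrizable setting. First I would observe that every metrizable space is first-countable and paracompact (the latter by the Stone theorem, \cite[Theorem 4.4.1]{Eng}), so Theorem~\ref{t:main3} applies verbatim: conditions (i) and (ii) are equivalent to each other and to the statement that $X$ is locally compact and $\sigma$-compact.

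It then remains to check that, for a metrizable $X$, ``locally compact and $\sigma$-compact'' is the same as ``locally compact and second countable''. In one direction, a $\sigma$-compact space is Lindel\"of, and a Lindel\"of metrizable space is separable, hence has a countable base. For the converse, a space with a countable base is Lindel\"of, and if in addition $X$ is locally compact, then covering $X$ by open sets with compact closures and passing to a countable subcover exhibits $X$ as a countable union of compact sets. Feeding this equivalence into Theorem~\ref{t:main3} yields the equivalence of (i), (ii) and (iii).

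There is no genuine obstacle here: the substantive content --- the implication (i)~$\implies$~(iii), which runs through the metric fan (Fact~\ref{l:fan}), the criterion for local compactness of first-countable paracompact spaces, and the structure theory of locally compact paracompact spaces --- has already been established in the proof of Theorem~\ref{t:main3}. The corollary is simply the translation of its hypotheses and conclusion into the metrizable language, where ``first-countable paracompact'' comes for free and, in the presence of local compactness, ``$\sigma$-compact'' may be replaced by ``has a countable base''.
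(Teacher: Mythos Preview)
Your proposal is correct and matches the paper's approach: the corollary is stated without proof as an immediate consequence of Theorem~\ref{t:main3}, and you have spelled out exactly the two routine observations (metrizable $\implies$ first-countable paracompact, and for locally compact metrizable spaces $\sigma$-compactness is equivalent to second countability) that make it so.
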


\begin{example}\label{e:P}
{\em
If $P$ is the space of irrational numbers, then as a straightforward consequence of Theorem~\ref{t:main3} we have that $L(P)$ is not multi-reflexive.
We note that an alternative proof of this fact readily follows from Lemma~\ref{l:2}.

Indeed, consider any separable non-reflexive Banach space $E$. For example, we can take the separable Banach space $E=l^1$.
Then weakly compact subsets of $E$ have empty interior, and it follows from the Baire Category Theorem
that $E$ is not the union of countably many weakly compact sets. On the other hand, $E$, 
like any Polish space, is the image of $P$ under a continuous map. Now apply Lemma~\ref{l:2}.
}
\end{example}

\section{Schwartz $A(X)$}\label{s:A(X)}
\bigskip
A concept of a {\em Schwartz topological group} which turned out to be coherent with
the concept of a Schwartz locally convex space 
has been introduced in \cite{Chasco}. Similarly to Schwartz LCS, the class of Schwartz groups enjoys 
analogous permanence properties: it is closed with respect to subgroups, Hausdorff quotients,
products and local isomorphisms. Also, bounded subsets of locally quasi-convex Schwartz groups are precompact \cite{Chasco}.

A natural question arises: when is the free abelian topological group $A(X)$ Schwartz?
 Note that the authors of \cite{Chasco} found sufficient conditions on $X$ implying that $A(X)$ is a Schwartz group,
which are identical to those conditions on $X$ implying that $L(X)$ is a Schwartz LCS.
In this section we give a complete characterization of first-countable paracompact (in particular, metrizable) spaces $X$
such that the free abelian topological group $A(X)$ is a Schwartz group. 

For a subset $U$ of an abelian group $G$ such that $0 \in U$, and a natural
number $n$, we denote $U_{(n)} = \{x \in G : kx \in U: k=1, 2, \dots, n\}$ \cite{Chasco}.
The next formulation is adjusted to Definition \ref{d:Sch}(b) of a Schwartz LCS.

\begin{definition}\cite{Chasco}\label{d:Schwartz_group}
Let $G$ be an abelian topological group. We
say that $G$ is a Schwartz group if for every neighborhood $U$ of zero in $G$
there exists another neighborhood $V$ of zero in $G$ and a sequence $(F_n)$ of
finite subsets of $G$ such that $V \subseteq F_n + U_{(n)}$ for every $n \in \N$.
\end{definition}

Below we collect several basic facts about the topology of $A(X)$ and its relations with the topology  of $L(X)$
 (for the details see \cite{Usp1}).

For a space $X$ we denote by $A_0(X)$ the open subgroup
$$
A_0(X)=\left\{\sum c_i x_i: \sum c_i=0, \ c_i\in \mathbb Z, \ x_i\in X\right\}
$$ 
of $A(X)$.
If $d$ is a continuous pseudometric on $X$, $d$ has a canonical translation-invariant
extension over $A_0(X)$ defined by 
\begin{equation}
\label{eq:A}
\overline d(v,0)=\inf\left\{\sum |c_i| d(x_i,y_i): v=\sum c_i(x_i-y_i), \ c_i\in \mathbb Z, \ x_i, y_i\in X\right\}
\end{equation}
and $\overline d(u,v)=\overline d(u-v,0)$. We denote the seminorm $v\mapsto\overline d(v,0)$
on $A_0(X)$ by the same symbol $\overline d$.
This seminorm is the restriction of the seminorm on $L_0(X)$ 
denoted by the same symbol in the equality (\ref{eq:L1}).
In other words, in the definition above we may replace the condition 
$c_i\in \mathbb Z$ by $c_i\in \mathbb R$ without affecting the result \cite{Usp1}.
The fact that in the equality (\ref{eq:A})
 the minimum over $c_i\in \mathbb Z$ is the same as the minimum
over $c_i\in \mathbb R$ follows from the integrality property for the transportation problem.
Let $a_1,\dots, a_m$ and $b_1,\dots, b_n$ be non-negative integers. Consider the compact convex set
of all $(m\ti n)$-matrices $(c_{ij})$ with real entries $\ge 0$ such that the sum of all entries
in the $i$-th row is $a_i$ and the sum in the $j$-th column is $b_j$ ($1\le i\le m$, $1\le j\le n$).
Then all extreme points of this convex set are matrices with integral entries.

Since the seminorm $\overline d$ on $A_0(X)$ is the restriction of 
a seminorm on the LCS $L_0(X)$, we immediately deduce the following assertion.

\begin{lemma}
\label{l:AL}
Let $d$ be a pseudometric on a set $X$, $\overline d$ the seminorm on $A_0(X)$ given
by the equality~(\ref{eq:A}). For every integer $n$ and $t\in A_0(X)$ we have 
$\overline d(nt)=|n|\overline d(t)$.
\end{lemma}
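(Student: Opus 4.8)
The plan is to exploit exactly the structural fact that the statement emphasizes just above: the seminorm $\overline d$ on $A_0(X)$ is the \emph{restriction} to $A_0(X)$ of the seminorm (denoted by the same symbol) on the real vector space $L_0(X)$ defined in~(\ref{eq:L1}). Once one accepts that identification --- which in turn rests on the integrality property of the transportation problem recalled in the preceding paragraph --- the lemma reduces to a triviality about seminorms. First I would recall that on $L_0(X)$ the functional $\overline d$ is a genuine seminorm, in particular positively homogeneous: $\overline d(\lambda v) = |\lambda|\,\overline d(v)$ for every real scalar $\lambda$ and every $v\in L_0(X)$. This is immediate from the defining infimum~(\ref{eq:L1}): a representation $v=\sum c_i(x_i-y_i)$ gives a representation $\lambda v = \sum (\lambda c_i)(x_i - y_i)$ with cost $\sum |\lambda c_i| d(x_i,y_i) = |\lambda| \sum |c_i| d(x_i,y_i)$, and conversely, so the two infima differ exactly by the factor $|\lambda|$.

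Next I would observe that for $t\in A_0(X)\subseteq L_0(X)$ and $n\in\mathbb Z$, the element $nt$ (the $n$-fold sum in the group $A_0(X)$) coincides with the scalar multiple $n\cdot t$ computed in the vector space $L_0(X)$; this is just the compatibility of the group structure of $A_0(X)$ with the ambient linear structure of $L_0(X)$, which is part of the standard relationship between $A(X)$ and $L(X)$ (see~\cite{Usp1}). Applying the homogeneity of the previous paragraph with the real scalar $\lambda = n$ then yields
$$
\overline d(nt) = |n|\,\overline d(t),
$$
which is the assertion.

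There is essentially no obstacle here: the only non-formal ingredient is the claim --- stated and justified in the paragraph preceding the lemma via the transportation-problem integrality result --- that the $\mathbb Z$-infimum in~(\ref{eq:A}) agrees with the $\mathbb R$-infimum in~(\ref{eq:L1}), so that $\overline d$ on $A_0(X)$ really is the restriction of a seminorm on $L_0(X)$. Granting that, the proof is a one-line appeal to positive homogeneity of a seminorm. If one preferred a self-contained argument avoiding the vector-space embedding, one could instead prove the two inequalities $\overline d(nt)\le |n|\,\overline d(t)$ and $\overline d(nt)\ge |n|\,\overline d(t)$ directly from~(\ref{eq:A}): the first by scaling a representation of $t$ by $n$ (legitimate since $n c_i\in\mathbb Z$), and the second, more delicate direction, again by the integrality property, which guarantees that an optimal \emph{real} representation of $nt$ can be taken with integer coefficients and hence, after dividing by $n$, furnishes a competitor for $\overline d(t)$; but routing through $L_0(X)$ as above is cleaner and is the route the surrounding text sets up.
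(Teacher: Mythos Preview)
Your proof is correct and matches the paper's approach exactly: the paper also deduces the lemma immediately from the fact that $\overline d$ on $A_0(X)$ is the restriction of a seminorm on the LCS $L_0(X)$, hence positively homogeneous. Your additional remarks spelling out homogeneity from~(\ref{eq:L1}) and sketching a self-contained alternative are fine elaborations but not needed.
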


Denote by $X^{(1)}$ the set of all non-isolated points of $X$.

\begin{theorem}
\label{t:main4}
Let $X$ be a first-countable paracompact space. The following conditions are equivalent: 
\begin{itemize}
  \item[(i)] $A(X)$ is a Schwartz group;
  \item[(ii)] $X$ is a locally compact space such that the set $X^{(1)}$ is $\sigma$-compact.
\end{itemize}
\end{theorem}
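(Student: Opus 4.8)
The plan is to prove the two implications of Theorem~\ref{t:main4} separately, with the hard direction being (ii)~$\implies$~(i).

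\medskip

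\textbf{(i)~$\implies$~(ii).} I would mimic the strategy of the proof of Theorem~\ref{t:main3}. First, since $A(X)$ embeds as a topological subgroup of $L(X)$ and the class of Schwartz groups contains all Schwartz LCS, one might hope to simply invoke Theorem~\ref{t:main3}; but that requires knowing $L(X)$ is Schwartz, which is not what we are assuming. Instead I would argue directly at the group level. The key point is that if $Y\subset X$ is closed, then $A(Y)\to A(X)$ is a topological embedding (this is the analogue of Theorem~\ref{t:Usp1} for $A$, also due to \cite{Usp1}), so it suffices to exclude a closed copy of the metric fan $M$ and to exclude an uncountable closed discrete subspace. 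For the metric fan: I would show $A(M)$ is not a Schwartz group by transplanting the argument of Fact~\ref{l:fan}. Embed $M$ into $l^1$ via the vectors $e_n/k$ together with $0$, extend to $\widehat f\colon A(M)\to l^1$; a Schwartz-group factorization would force, via Definition~\ref{d:Schwartz_group} and Lemma~\ref{l:AL} (the homogeneity $\overline d(nt)=|n|\overline d(t)$ is exactly what lets the ``multiply by $k$'' step go through as in Fact~\ref{l:fan}), that all $e_n$ lie in a ``small'' set, contradicting that $\{e_n\}$ is weakly closed and discrete. For the uncountable discrete subspace: if $X$ had an uncountable closed discrete $Z$, then $A(Z)$ would be a Schwartz group; but $A(Z)=A(\text{discrete set})$ contains $L(Z)$-like behaviour — more directly, embed $Z$ into $l^1(\Gamma)$ as its canonical basis and run the argument of Fact~\ref{l:exampleD} using the Schur property, again invoking Lemma~\ref{l:AL} to pass between the defining condition and the norm estimates. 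So $X$ is first-countable, paracompact, contains no closed copy of $M$, hence is locally compact by \cite[Lemma~8.3]{vD}. Writing $X$ as a disjoint union of clopen $\sigma$-compact pieces (\cite[Theorem~5.1.27]{Eng}), the absence of an uncountable closed discrete subspace limits the number of \emph{non-compact} pieces and the number of pieces on which $X^{(1)}$ is nonempty; the remaining task is to see that $X^{(1)}$ itself is $\sigma$-compact. Here I would note that the isolated points contribute nothing to $X^{(1)}$, and on each clopen $\sigma$-compact locally compact piece $X^{(1)}$ is closed, hence $\sigma$-compact; and only countably many pieces have non-isolated points, so $X^{(1)}$ is a countable union of $\sigma$-compact sets.

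\medskip

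\textbf{(ii)~$\implies$~(i).} This is the main obstacle. The idea is: if $X$ is locally compact with $X^{(1)}$ $\sigma$-compact, then $X$ splits as a clopen sum of a discrete space $D$ (the isolated points forming a clopen piece, or rather $X\setminus\overline{X^{(1)}}$ suitably handled) and a $k_\omega$-space $X_0$ containing $X^{(1)}$. More precisely, since $X$ is locally compact and paracompact it is a disjoint union of clopen $\sigma$-compact (hence $k_\omega$) locally compact pieces; group together all pieces meeting $X^{(1)}$ — there are countably many, say, once one checks that a $\sigma$-compact set can meet only countably many clopen pieces — into one clopen $k_\omega$-subspace $X_0$, and let $D$ be the (clopen, discrete) union of the rest. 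Then $A(X)\cong A(X_0)\oplus A(D)$ as topological groups (the free abelian group over a topological sum is the direct sum of the free abelian groups). Now $A(X_0)$ is a Schwartz group because $X_0$ is a $k_\omega$-space and \cite{Chasco} establishes that $A$ of a $k_\omega$-space is Schwartz (parallel to our Theorems~\ref{t:lx}--\ref{t:loccomp} for $L$). And $A(D)$ for $D$ discrete: I claim $A(D)$ is a Schwartz group — indeed $A(D)$ carries the finest group topology making the generators a closed discrete set, and a quick check against Definition~\ref{d:Schwartz_group} with $V=U$ and $F_n=\{0\}$ should work once one verifies $U_{(n)}\supseteq U$ fails in general, so one must be slightly more careful: take $U$ a basic neighborhood, choose $V$ so that $V\subseteq U_{(n)}$ for all $n$ by using that the topology is determined by pseudometrics pulled back from $D$ which are already ``integral'', so $\overline d(kt)=k\overline d(t)$ and $\overline d(t)<\delta/n$ on $V$ forces $kt\in U$ for $k\le n$; since $V$ can be taken not to depend on $n$ (shrink once), $F_n=\{0\}$ suffices. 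Since the class of Schwartz groups is closed under finite (indeed countable) products, $A(X)=A(X_0)\oplus A(D)$ is a Schwartz group.

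\medskip

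The step I expect to be genuinely delicate is the structural decomposition in (ii)~$\implies$~(i): verifying that a $\sigma$-compact subset of a locally compact paracompact space meets only countably many of the clopen $\sigma$-compact summands, and that grouping them yields an honest clopen $k_\omega$-subspace $X_0$ containing all non-isolated points, with the complement genuinely discrete. Also the interplay between Definition~\ref{d:Schwartz_group} (which involves $U_{(n)}$, a \emph{decreasing-in-strength} gadget) and the seminorm homogeneity of Lemma~\ref{l:AL} needs care in both directions — it is precisely the reason the metric-fan obstruction survives passage from $L$ to $A$, and the reason the discrete summand causes no trouble.
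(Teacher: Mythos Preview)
Your (ii)~$\implies$~(i) direction is essentially the paper's argument, though you are overcomplicating the discrete piece: for $D$ discrete the group $A(D)$ is itself a \emph{discrete} topological group, so it is trivially a Schwartz group (take $V=\{0\}$). No seminorm estimates are needed.

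The direction (i)~$\implies$~(ii) has a genuine gap, and in fact an internal contradiction. You propose to rule out an uncountable closed discrete subspace $Z\subset X$ by showing that $A(Z)$ fails to be Schwartz. But $A(Z)$ is discrete, hence Schwartz --- exactly the fact you rely on in the other direction. Condition~(ii) explicitly allows uncountably many isolated points, so (i) cannot possibly exclude uncountable closed discrete subspaces of $X$; the parallel with Fact~\ref{l:exampleD} breaks down precisely here. Consequently your route to ``only countably many clopen pieces meet $X^{(1)}$'' collapses. The paper repairs this by working inside $X^{(1)}$: once $X$ is shown to be locally compact, $X^{(1)}$ is locally compact paracompact; if it were not $\sigma$-compact one finds an uncountable closed discrete $W\subset X^{(1)}$, and since each $w\in W$ is \emph{non-isolated in $X$} one picks a convergent sequence $S_w\to w$ inside a discrete family of open sets (collectionwise normality). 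The closed subspace $Y=\bigcup_w S_w$ is then a disjoint sum of uncountably many convergent sequences, and one proves directly (the paper's Fact~4.4) that $A(Y)$ is not a Schwartz group via an explicit seminorm/support argument using Lemma~\ref{l:AL}. The non-isolatedness is essential: it is what supplies the convergent sequences that make $A(Y)$ non-discrete and obstruct the covering $V\subset F_n+U_{(n)}$.

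A secondary issue: your plan to show $A(M)$ is not Schwartz by ``transplanting'' the $l^1$-factorization argument of Fact~\ref{l:fan} is not how the paper proceeds and is not obviously viable. The Schwartz-group property (Definition~\ref{d:Schwartz_group}) is an internal covering condition, not a factorization statement, so there is no ``Schwartz-group factorization'' to exploit. The paper instead gives a direct combinatorial argument (Fact~4.3): for the natural metric $d$ on $M$ and $U$ the unit $\overline d$-ball in $A_0(M)$, any neighborhood $V$ of zero contains a whole layer $M_k-p$, and Lemma~\ref{l:AL} forces the translates $x+U_{(n)}$, $x\in M_k$, to be pairwise disjoint for $n>k$, so no finite $F_n$ can cover $V$.
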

\begin{proof} (ii) $\implies$ (i): $X^{(1)}$ is a Lindel\"of space, therefore we can represent $X$ as a disjoint sum $D \oplus Y$
of two clopen subsets $D$ and $Y$,
 where $D$ consists of isolated points of $X$ and $Y$ is locally compact and $\sigma$-compact.
  Further, the group $A(D)$ is discrete and the group $A(Y)$ is Schwartz, since $L(Y)$ is Schwartz, by Theorem \ref{t:loccomp}.
 We conclude that $A(X)$ is a Schwartz group, in view of the known fact that $A(X)$ is isomorphic to the product  $A(D) \times A(Y)$. 

(i) $\implies$ (ii): On the contrary, assume that $X$ is not locally compact. Then, as we have observed earlier in the proof of Theorem \ref{t:main3},
 $X$ contains a closed copy of the metric fan $M$. By Theorem \ref{t:Usp1}, this means that the free abelian topological group $A(X)$ contains an isomorphic copy of 
the free abelian topological group $A(M)$. It remains to prove the following

\begin{fact}\label{l:A(M)}
Let $M$ be the metric fan. Then $A(M)$ is not a Schwartz group.
\end{fact}
\begin{proof}
The fan $M=(\N\ti \N)\cup\{p\}$ can be represented as a countable union of disjoint closed
discrete layers $\{M_k: k \in \N\}$, $M_k=\N\ti\{k\}$, and a single non-isolated point $p$
such that for every choice $x_k \in M_k$ the sequence $(x_k)$ converges to $p$ in $M$.
Consider the natural metric $d$ on $M$ defined as follows: 
the distance $d(x,p) = 1/k$ for every $x \in M_k$;
the distance $d(x,y)= 2/k$ between any two distinct points $x, y \in M_k$;
 and $d(x,y)= 1/k + 1/l$ if $x \in M_k, y \in M_l, k \neq l$. 
Let $U$ be the unit ball in $A_0(M)$ with respect to the metric $\bar{d}$ on $A_0(M)$.
If $V$ is any neighborhood of zero, we can find a natural $k$ such that all points in the $k$-th layer $M_k$ of the fan are in $p+V$. 
Since $d(x,y) = 2/k$ for any two distinct points $x, y$ of $M_k$, 
we conclude that $\bar{d}(nx,ny)=2n/k$ (by Lemma~\ref{l:AL}).
It follows that for $n > k$ the difference $x-y$ does not belong to $U_{(n)}-U_{(n)}$, so the balls
$x+U_{(n)}$, as $x$ runs over $M_k$, are pairwise disjoint. Therefore, $M_k$ cannot be covered by
a set of the form $F_n + U_{(n)}$, where $F_n$ is finite. The same is true for the translate $M_k - p$
of $M_k$ and moreover for the larger set $V$.
\end{proof}

So, we obtained a contradiction assuming that $X$ is not locally compact. Therefore, $X^{(1)}$ as a closed subset of $X$ is itself locally compact (and paracompact).
Then, as we have observed earlier in the proof of Theorem \ref{t:main3},
being locally compact and paracompact, $X^{(1)}$ can be represented as a disjoint union of clopen (in $X^{(1)}$) $\sigma$-compact subspaces.
Striving for a contradiction, assume that the number of clopen components in this representation is uncountable.
Then we can find an uncountable closed discrete 
subset $W \subset X^{(1)} \subset X$.
Paracompact spaces are collectionwise normal \cite[Theorem 5.1.18]{Eng}, hence there exists a discrete
family $\{U_w: w\in W\}$ of open sets in $X$ such that $w\in U_w$ for every $w\in W$. 
Every $w\in W$ is the limit of a convergent sequence $S_w\sbs U_w$. 
Put $Y = \bigcup_{w\in W} S_w$. Then $Y$ is closed in $X$ (as the union of a discrete family
of closed sets), and $Y$ is homeomorphic to a disjoint sum of uncountably many convergent sequences.
In order to get a contradiction it now suffices to prove the following

\begin{fact}\label{l:A(Y)}
Let $Y$ be the disjoint sum of uncountably many convergent sequences. 
Then $A(Y)$ is not a Schwartz group.
\end{fact}
\begin{proof}
Let $S=\{1,1/2,1/3,\dots\}\cup\{0\}$ be the standard convergent sequence together with its limit, 
$W$ be an uncountable index set (regarded as a discrete space), and $Y=S\ti W$.
 Equip $Y$ with the natural metric $d$ such that the distance $d$ between $(1/k, w')$ 
and $(1/l, w'')$ is $|1/k - 1/l|$ if $w' = w''$ and $1$ otherwise. Take  
$U=\{t\in A_0(Y):\overline d(t)<1\}$. It suffices to prove that there do not exist
a neighborhood $V$ of
zero in $A_0(Y)$ and finite sets $F_n$ such that $V\sbs F_n+U_{(n)}$ for all $n=1, 2, \dots$.

Assume the contrary: such $V$ and $F_n$'s exist. If $w\in W$ and $\e>0$, let us say that $V$ is
{\it $\e$-fat in the direction} $w$ if $x-y\in V$ for all points $x=(1/k, w)$ and $y=(1/k', w)$
such that $1/k\le \e$ and $1/k'\le \e$. For every $w\in W$ there exists an integer $k\in \N$ such that
$V$ is $1/k$-fat in the direction $w$, so we can choose an infinite set $J\sbs W$ and $k\in \N$ such that
$V$ is $1/k$-fat in the direction $w$ for every $w\in J$. 

For $w\in J$
consider the points $x_w=(1/k, w)$ and $y_w=(1/(k+1), w)$. Note that the distance from $x_w$ to any 
other point in $Y$ is $\ge 1/k(k+1)$. 
From the equality~(\ref{eq:A}) for $\overline d$ we conclude that 
$\overline d(t)\ge 1/k(k+1)$ for every $t\in A_0(Y)$ such that the support of $t$ contains 
$x_w$. 
(The {\em support} $\text{supp\,}t$ of $t=\sum_{y\in Y} n_y y$ 
is the set of all $y\in Y$ such that $n_y\neq0$.) If $n\ge k(k+1)$, we have 
$\overline d(nt)=n\overline d(t)\ge1$ (by Lemma~\ref{l:AL}), so $t\notin U_{(n)}$. Since 
$x_w - y_w\in V\sbs F_n+U_{(n)}$ and the support of any $t\in U_{(n)}$ does not contain $x_w$, 
it follows that there is $t_w\in F_n$ such that $x_w$ belongs to the support of $t_w$.
Since this is true for every $w\in J$, we get a contradiction: the infinite set 
$\{x_w:w\in J\}$ is covered
by finitely many finite sets $\text{supp\,}t$, $t\in F_n$.
\end{proof}

We now can finish the proof of Theorem~\ref{t:main4}:
$X^{(1)}$ can be represented as at most countable union of clopen $\sigma$-compact subspaces, and the proof is complete.
\end{proof}

\begin{corollary}
\label{c:main2} Let $X$ be a metrizable space. The following conditions are equivalent: 
\begin{itemize}
  \item[(i)] $A(X)$ is a Schwartz group;
  \item[(ii)] $X$ can be represented as a disjoint union of two clopen subsets $D$ and $Y$, where $D$ consists of isolated points of $X$ and $Y$ is 
	locally compact and has a countable base.
\end{itemize}
\end{corollary}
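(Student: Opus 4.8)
The plan is to derive Corollary~\ref{c:main2} directly from Theorem~\ref{t:main4}, using only the fact that a metrizable space is both first-countable and paracompact, together with the standard coincidences among countability properties in metric spaces: for a metric space, $\sigma$-compact $\implies$ Lindel\"of $\iff$ second countable, and locally compact $+$ second countable $\iff$ locally compact $+$ $\sigma$-compact. Thus the whole argument reduces to translating the condition ``$X$ is locally compact and $X^{(1)}$ is $\sigma$-compact'' of Theorem~\ref{t:main4} into the concrete decomposition stated in (ii).

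First I would prove (i) $\implies$ (ii). Assuming $A(X)$ is a Schwartz group and $X$ is metrizable (hence first-countable and paracompact), Theorem~\ref{t:main4} gives that $X$ is locally compact and $X^{(1)}$ is $\sigma$-compact, in particular Lindel\"of. Now, exactly as in the proof of Theorem~\ref{t:main4}: being locally compact and paracompact, $X=\bigsqcup_{\alpha} C_\alpha$ is a disjoint union of clopen $\sigma$-compact subspaces $C_\alpha$ (by \cite[Theorem 5.1.27]{Eng}); since $X^{(1)}$ is Lindel\"of it meets only countably many of the $C_\alpha$. Let $Y$ be the union of those (at most countably many) $C_\alpha$ that meet $X^{(1)}$, and $D=X\setminus Y$. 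Then $Y$ is clopen, locally compact and $\sigma$-compact, while $D$ is clopen and contains no non-isolated point of $X$, hence $D$ is discrete. Finally, $Y$ is a $\sigma$-compact metric space, therefore Lindel\"of and thus second countable, so $Y$ has a countable base, which is (ii).

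Next I would prove (ii) $\implies$ (i). Suppose $X=D\oplus Y$ with $D$ discrete and $Y$ locally compact with a countable base. Then $X$ is locally compact, being a topological sum of locally compact spaces. Every point of $D$ is isolated in $X$ (as $D$ is clopen and discrete), and since $Y$ is clopen a point of $Y$ is isolated in $X$ iff it is isolated in $Y$; hence $X^{(1)}=Y^{(1)}\subseteq Y$. But $Y$, being second countable and locally compact, is $\sigma$-compact, and $X^{(1)}$ is a closed subset of $Y$, hence itself $\sigma$-compact. Applying Theorem~\ref{t:main4} once more (valid since $X$ is metrizable, hence first-countable and paracompact), we conclude that $A(X)$ is a Schwartz group.

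I do not expect a genuine obstacle here: all the substantive content is already packaged in Theorem~\ref{t:main4} and in the structure theorem for locally compact paracompact spaces. The only points requiring a little care are verifying that the clopen summand $Y$ extracted from the $C_\alpha$ is genuinely $\sigma$-compact (a countable union of $\sigma$-compact pieces), that $D$ really consists of isolated points of $X$, and that passing to the metrizable setting legitimately replaces ``$\sigma$-compact'' by ``second base''; after that the argument is immediate.
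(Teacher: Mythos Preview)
Your argument is correct and is essentially what the paper intends: Corollary~\ref{c:main2} is stated without proof immediately after Theorem~\ref{t:main4}, and your derivation simply makes explicit the translation between ``$X$ is locally compact with $X^{(1)}$ $\sigma$-compact'' and the clopen decomposition $X=D\oplus Y$, using the same structure theorem for locally compact paracompact spaces \cite[Theorem 5.1.27]{Eng} already invoked inside the proof of Theorem~\ref{t:main4}. The only cosmetic slip is the phrase ``second base'' in your last sentence, which should read ``countable base'' (or ``second countable'').
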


The last part of this section is devoted to the following goal: find sufficient conditions on $X$, more general than
those in Theorem \ref{t:loccomp}, which imply that $L(X)$ is a Schwartz LCS (and $A(X)$ is a Schwartz group).

Recall that a Tychonoff space is {\em pseudocompact} if every continuous real-valued function defined on $X$ is bounded.
Let us say that a Tychonoff space $X$ is {\em $QS$-pseudocompact}, if there is a countable sequence of pseudocompact spaces $(X_n)$
such that $X$ can be represented as a quotient of the free sum $\bigoplus_{n\in\N} X_n$.

\begin{theorem}\label{t:QS} 
Let $X$ be a $QS$-pseudocompact space. Then $L(X)$ is a Schwartz LCS.
\end{theorem}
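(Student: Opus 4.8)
The plan is to use the permanence properties of Schwartz spaces to reduce the statement to the case of a single pseudocompact space, and then to run the argument of Theorem~\ref{t:lx} in that case, the one new ingredient being that a continuous pseudometric on a pseudocompact space has compact metric quotient.

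\emph{Reduction.} By hypothesis there are pseudocompact spaces $X_n$, $n\in\N$, and a quotient map $q\colon\bigoplus_{n}X_n\to X$. As in the discussion following Theorem~\ref{t:main2} (and, for the lifting of quotient maps, as in~\cite{Okunev}), $q$ induces a continuous linear quotient map $L(\bigoplus_n X_n)\to L(X)$, while $L(\bigoplus_n X_n)$ is canonically isomorphic to the countable locally convex direct sum $\bigoplus_n L(X_n)$ (the functor $L$ carries topological coproducts to locally convex direct sums). Since the class of Schwartz spaces is stable under Hausdorff quotients and under countable locally convex direct sums \cite[Proposition~21.1.7]{Jarch}, it suffices to prove that $L(X_n)$ is Schwartz for each $n$; hence we may assume that $X$ itself is pseudocompact.

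\emph{The pseudocompact case.} Let $T\colon L(X)\to B$ be a continuous linear map to a Banach space; we must show that $T$ is compact. As in the proof of Theorem~\ref{t:lx}, let $d$ be the continuous pseudometric induced on $X$ by $T$ from the norm of $B$, so that $T$ factors as $L(X)\to B_d\to B$. The natural map $\phi$ from $X$ onto its metric quotient $K$ (equipped with the metric $d_K$ induced by $d$) is continuous, so $K$, being a continuous image of the pseudocompact space $X$, is pseudocompact; being also metrizable, $K$ is compact, and it coincides with the completion of $(X,d)$ modulo the kernel of $d$. The surjection $L_0(X)\to L_0(K)$ induced by $\phi$ carries $\overline d$ to $\overline{d_K}$ isometrically — by the description of $B_d^{*}$ as $C_d/\mathbb R$ recorded above, since every $d_K$-Lipschitz function on $K$ pulls back to a $d$-Lipschitz function on $X$ and conversely — so $B_d$ is isomorphic to $B_{d_K}$; likewise $B_{\sqrt d}\cong B_{\sqrt{d_K}}$, the pseudometric $\sqrt d$ being again continuous and bounded. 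Now Lemma~\ref{l:sqrt}, applied to the compact space $K$ and the metric $d_K$, gives that the operator $B_{\sqrt{d_K}}\to B_{d_K}$ is compact; hence the middle arrow of the factorization
$$
L(X)\longrightarrow B_{\sqrt d}\longrightarrow B_d\longrightarrow B
$$
of $T$ is compact, and so is $T$. Thus every continuous linear map from $L(X)$ to a Banach space is compact, that is, $L(X)$ is a Schwartz LCS; together with the reduction this proves the theorem.

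\emph{Expected main obstacle.} The reduction is routine, relying only on already-established permanence properties, so the content lies entirely in the pseudocompact case. The two points that must be pinned down there are (i) that a continuous pseudometric on a pseudocompact space is totally bounded — equivalently, has compact metric quotient — which is what makes the compact space $K$ available, and (ii) the identification of $B_d$ and $B_{\sqrt d}$ with the Banach spaces of that compact metric quotient, which is what makes Lemma~\ref{l:sqrt} literally applicable. An alternative to routing through $B_d$ would be to rerun the Arzel\`{a}--Ascoli argument inside the proof of Lemma~\ref{l:sqrt} directly for a totally bounded pseudometric (passing to the compact completion only in order to invoke Arzel\`{a}--Ascoli), or else to observe that $T$ factors through a map $L(K)\to B$ and cite Theorem~\ref{t:lx} for the compact $K$; I would present whichever keeps the exposition shortest.
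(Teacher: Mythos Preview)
Your reduction step matches the paper's exactly. For the pseudocompact case, however, the paper takes a shorter route: it invokes the known fact (from \cite{Tk} or \cite{Usp1}) that for pseudocompact $X$ the natural map $L(X)\to L(\beta X)$ is a linear topological embedding, and then simply observes that $L(\beta X)$ is Schwartz by Theorem~\ref{t:lx} and that the Schwartz property is hereditary to subspaces. Your argument instead stays inside the proof machinery of Theorem~\ref{t:lx}: for each continuous pseudometric $d$ you pass to the compact metric quotient $K$, identify $B_d$ and $B_{\sqrt d}$ with the corresponding Banach spaces over $K$, and apply Lemma~\ref{l:sqrt} there. This is correct --- the identification $B_d\cong B_{d_K}$ is immediate from the definition~(\ref{eq:L1}), since $d(x,y)=d_K(\phi(x),\phi(y))$ and $\phi$ is onto --- and has the virtue of being self-contained, not needing the $L(X)\hookrightarrow L(\beta X)$ embedding. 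The paper's version is terser because it outsources that work to a citation; yours makes explicit the one structural point (continuous pseudometrics on a pseudocompact space have compact metric quotient) that drives both arguments.
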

\begin{proof} By the argument used in the paragraph after Theorem~\ref{t:main2}, 
the general case can be reduced to the case when $X$ is pseudocompact. 
In that case $L(X)$ is isomorphic to a linear subspace of $L(\beta X)$, 
where $\beta X$ is the \v{C}ech--Stone compactification of $X$
(see \cite{Tk} or \cite{Usp1}).
Since $L(\beta X)$ is Schwartz, by Theorem \ref{t:lx}, we conclude that $L(X)$ also is Schwartz.
\end{proof}

\begin{corollary}\label{c:QS}
Let X be a free sum of a discrete space and a $QS$-pseudocompact space. Then $A(X)$ is a Schwartz group.
\end{corollary}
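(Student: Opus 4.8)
The plan is to reduce the statement about the free abelian topological group $A(X)$ to the statement about the free locally convex space $L(X)$ proved in Theorem~\ref{t:QS}, using the standard fact that $A(X)$ embeds as a topological subgroup of $L(X)$ together with the permanence properties of the class of Schwartz groups. First I would write $X = D \oplus Z$, where $D$ is discrete and $Z$ is $QS$-pseudocompact. Since $A(X)$ is isomorphic to $A(D)\times A(Z)$ (the same product decomposition already invoked in the proof of Theorem~\ref{t:main4}), and since the class of Schwartz groups is closed under finite products, it suffices to treat the two factors separately. The group $A(D)$ is discrete, hence trivially a Schwartz group. So the whole matter comes down to showing that $A(Z)$ is a Schwartz group when $Z$ is $QS$-pseudocompact.

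For that, I would apply Theorem~\ref{t:QS} to conclude that $L(Z)$ is a Schwartz LCS. Now $A(Z)$ naturally embeds into $L(Z)$ as a topological subgroup — this is recorded in the Introduction (\cite{Tk}, \cite{Usp1}), and the relationship between the seminorms $\overline d$ on $A_0(Z)$ and on $L_0(Z)$ spelled out before Lemma~\ref{l:AL} makes the embedding explicit. Since, as noted at the start of Section~\ref{s:A(X)} (following \cite{Chasco}), the class of Schwartz groups is closed under the passage to subgroups, it follows that $A(Z)$ is a Schwartz group. Combining, $A(X)\cong A(D)\times A(Z)$ is a Schwartz group.

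I do not anticipate a genuine obstacle here: every ingredient — the product decomposition $A(X)\cong A(D)\times A(Z)$, the embedding $A(Z)\hookrightarrow L(Z)$, and the closure of Schwartz groups under subgroups and finite products — is already available in the text or in the cited literature, and Theorem~\ref{t:QS} supplies exactly the LCS input needed. The only point that deserves a sentence of care is the verification that the decomposition $X = D\oplus Z$ is legitimate: a free sum of a discrete space and a $QS$-pseudocompact space is by hypothesis of this exact form, so there is nothing to check. Thus the corollary is immediate from Theorem~\ref{t:QS} and the standard permanence properties of Schwartz groups.
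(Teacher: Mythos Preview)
Your proposal is correct and follows essentially the same argument the paper intends: the corollary is stated without proof precisely because it is immediate from Theorem~\ref{t:QS} via the decomposition $A(X)\cong A(D)\times A(Z)$, the embedding $A(Z)\hookrightarrow L(Z)$, and the closure of Schwartz groups under subgroups and products --- exactly the reasoning already used in the (ii)$\Rightarrow$(i) direction of Theorem~\ref{t:main4}.
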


\begin{example}\label{omega_1}
{\em
Let $X$ be an ordered topological space $[0,\omega_1)$ consisting of all countable ordinals.
Then $X$ is a countably compact (hence pseudocompact) first-countable locally compact space.
Further, $X$ is not $\sigma$-compact.
Nevertheless, by Theorem \ref{t:QS}, $L(X)$ is a Schwartz LCS. There is no contradiction with Theorem \ref{t:main3}
because $X$ is not paracompact.
} 
\end{example}

\section{Open problems}\label{s:problems}
\bigskip
\begin{problem}\label{prob1}
Characterize all Tychonoff spaces $X$ such that $L(X)$ are nuclear / multi-Hilbert.
\end{problem}

\begin{problem}\label{prob2}
Characterize all Tychonoff spaces $X$ such that $L(X)$ are Schwartz / multi-reflexive.
\end{problem}

\begin{problem}\label{prob3}
Does there exist a Tychonoff space $X$ such that $L(X)$ is multi-Hilbert but not nuclear?
\end{problem}

\begin{problem}\label{prob4}
Does there exist a Tychonoff space $X$ such that $L(X)$ is multi-reflexive but not Schwartz?
\end{problem}

A more specific question is the following.

\begin{problem}\label{prob5}
Let $p$ be a point from the the remainder of the Stone-\v{C}ech compactification $\beta \mathbb N \setminus \mathbb N$.
Denote by $X=\mathbb N \cup \{p\}$. Is $L(X)$ nuclear or multi-Hilbert?
\end{problem}

Note that 
every Schwartz space can be isomorphically embedded in some sufficiently high power of the Banach space $c_0$ \cite{Randtke2}.
There are examples of Schwartz spaces which cannot be isomorphically embedded in any power of the Banach space $l^p$ $(1 < p < \infty)$ \cite{Bel1}.
On the other hand, every nuclear space can be isomorphically embedded in some sufficiently high power of every Banach space \cite{Saxon}. 
These facts provide motivation for the following 
questions.

\begin{problem}\label{prob6}
Characterize all Tychonoff spaces $X$ such that $L(X)$ can be isomorphically embedded in some sufficiently high power of the Banach space $c_0$. 
\end{problem}

\begin{problem}\label{prob7}
Characterize all Tychonoff spaces $X$ such that $L(X)$ can be isomorphically embedded in some sufficiently high power of the Banach space $l^p$ $(1 \leq p < \infty)$. 
\end{problem}

\textbf{Acknowledgements.} We would like to express our gratitude to Vladimir Pestov and Michael Megrelishvili 
for the most stimulating questions and discussions on the topic. We thank Mikhail Ostrovskii for 
guiding us towards the paper \cite{Joichi}.

\end{document}